\newcommand{\br}[3]{{$#1$}$\lower4pt\hbox{$\tp\atop\raise4pt \hbox{$\scriptscriptstyle{#2}$}$} ${$#3$}}
\newcommand{\tw}[3]{{$#1$}${\,\scriptscriptstyle {#2}}\atop\raise9pt\hbox{$\scriptstyle\tp$} ${$#3$}}
\newcommand{\ttps}[2]{{#1}\raise5pt\hbox{$\lower12pt\hbox{$\scriptstyle\tp$}\atop \lower0pt\hbox{$\tilde\;$}$}\raise4.5pt\hbox{${\scriptstyle{#2}}$}}
\newcommand{\st}[1]{\mbox{${\,\scriptscriptstyle {#1}}\atop\raise5.5pt\hbox{$*$}$}}
\newcommand{\rd}[1]{\mbox{${\,\scriptscriptstyle {#1}}\atop\raise5.5pt\hbox{$\bullet$}$}}
\newcommand{\rt}[1]{\otimes_\chi}
\newcommand{\lt}[1]{\mbox{${\,\scriptscriptstyle {#1}}\atop\raise5.5pt\hbox{$\ltimes$}$}}
\newcommand{\btr}{\raise1.2pt\hbox{$\scriptstyle\blacktriangleright$}\hspace{2pt}}
\newcommand{\btl}{\raise1.2pt\hbox{$\scriptstyle\blacktriangleleft$}\hspace{2pt}}
\newcommand{\lcr}{\raise1.0pt \hbox{${\scriptstyle\rightharpoonup}$}}
\newcommand{\rcr}{\raise1.0pt \hbox{${\scriptstyle\leftharpoonup}$}}
\newcommand{\ttp}{{\lower12pt\hbox{$\tp$}\atop \hbox{$\tilde\;$}}}
\newcommand{\id}{\mathrm{id}}
\newcommand{\Hc}{\mathcal{H}}
\newcommand{\Hg}{\mathfrak{H}}
\newcommand{\Sc}{\mathcal{S}}
\renewcommand{\S}{\mathcal{S}}
\newcommand{\Ru}{\mathcal{R}}
\newcommand{\Cc}{\mathcal{C}}
\renewcommand{\O}{\mathcal{O}}
\newcommand{\C}{\mathbb{C}}
\newcommand{\Qbb}{\mathbb{Q}}
\newcommand{\Z}{\mathbb{Z}}
\newcommand{\N}{\mathbb{N}}
\newcommand{\tp}{\otimes}
\newcommand{\zt}{\zeta}
\newcommand{\U}{U}
\newcommand{\gm}{\gamma}
\newcommand{\dt}{\delta}
\newcommand{\op}{\oplus}
\newcommand{\la}{\lambda}
\newcommand{\End}{\mathrm{End}}
\newcommand{\rk}{\mathrm{rk}}
\newcommand{\Rm}{\mathrm{R}}
\newcommand{\ad}{\mathrm{ad}}
\newcommand{\Ga}{\Gamma}
\newcommand{\g}{\mathfrak{g}}
\newcommand{\e}{\mathfrak{e}}
\renewcommand{\b}{\mathfrak{b}}
\newcommand{\h}{\mathfrak{h}}
\newcommand{\s}{\mathfrak{s}}
\newcommand{\f}{\mathfrak{f}}
\newcommand{\eps}{\epsilon}
\newcommand{\p}{\mathfrak{p}}
\renewcommand{\l}{\mathfrak{l}}
\newcommand{\si}{\sigma}
\newcommand{\al}{\alpha}
\renewcommand{\b}{\mathfrak{b}}
\newcommand{\bt}{\beta}
\newcommand{\be}{\begin{eqnarray}}
\newcommand{\ee}{\end{eqnarray}}
\newtheorem{thm}{Theorem}[section]
\newtheorem{propn}[thm]{Proposition}
\newtheorem{lemma}[thm]{Lemma}
\newtheorem{conjecture}{Conjecture}
\newtheorem{definition}[thm]{Definition}
\newcommand{\parag}{\advance\prg by1 {\noindent\bf\thesection.\the\prg\hspace{6pt}}}
\begin{document}
\title{ Shapovalov elements of classical and quantum groups}
\author{
Andrey Mudrov \vspace{10pt}\\
\vspace{10pt}
\small
\sl In memorium of Vladimir Lyachovsky
\\
\small
 Moscow Institute of Physics and Technology,\\
\small
9 Institutskiy per., Dolgoprudny, Moscow Region,
141701, Russia,
\vspace{10pt}\\
\small University of Leicester, \\
\small University Road,
LE1 7RH Leicester, UK,
\vspace{10pt}\\
\small e-mail:  am405@le.ac.uk
\\
}

\date{ }

\maketitle

\begin{abstract}
Shapovalov elements $\theta _{\beta,m}$
of the classical or quantized universal enveloping algebra of
  a simple Lie algebra $\g$ are
parameterized by a positive root $\beta$ and a positive integer $m$.
They relate the
highest vector of
a reducible Verma module with highest vectors of its submodules.
We obtain a factorization of $\theta_{\beta,m}$ to  a product of $\theta_{\beta,1}$
and calculate $\theta_{\beta,1}$ as a residue of  a matrix element of the inverse Shapovalov form
via a generalized Nigel-Moshinsky algorithm. This way  we  explicitly express
$\theta_{\beta,m}$ of  a  classical simple Lie algebra through the Cartan-Weyl basis in $\mathfrak{g}$. In the case of quantum groups,
we give an analogous formulation through  the entries of the R-matrix (quantum $L$-operator) in fundamental representations.
\end{abstract}

{\small \underline{Key words}:  Shapovalov elements,   Shapovalov form,  Verma modules, singular vectors, Hasse diagrams, R-matrix}
\\
{\small \underline{AMS classification codes}: 17B10, 17B37}
 \newpage
 \section{Introduction}
Category $\O$ introduced  in  \cite{BGG1} for
semi-simple Lie algebras and  later defined for many other
classes of algebras   including quantum groups plays a fundamental role  in various fields of mathematics and mathematical physics.
In particular, it accommodates finite-dimensional and numerous important infinite dimensional representations like  parabolic Verma modules and
their  generalizations \cite{H}. There are distinguished objects in $\O$ called Verma modules that feature a universality
property: all simple modules in $\O$ are their quotients.   The maximal proper submodule
in a Verma module is  generated by extremal vectors \cite{BGG2}, which are invariants
of the positive triangular  subalgebra.
This makes extremal vectors critically important in representation theory.

Extremal vectors in a Verma module $V_\la$ are related with the vacuum vector of highest weight $\la$  via special elements
$\theta_{\bt,m}$ of the (classical or quantum) universal enveloping of the  negative Borel subalgebra that are
called Shapovalov elements \cite{Shap,C}.
They are parameterized with a positive root $\bt$ and an integer $m\in \N$
validating a  De Concini-Kac-Kazhdan condition on   $\la$. In the classical version, it is $2(\la+\rho,\bt)-m(\bt,\bt)=0$  with $\rho$
being the half-sum of positive roots. This  condition guarantees that the Verma module is reducible. In the special case when the root $\bt$ is
simple, the element $\theta_{\bt,m}$ is a power $f^m_\bt$ of the simple root vector $f_\bt$ of weight $\bt$.  For non-simple $\bt$,
the Shapovalov elements are  complicated polynomials in negative Chevalley generators with coefficients in the Cartan subalgebra.
It can be viewed as a function $\theta_{\bt,m}(\la)$ of the highest weight  of a generic Verma module $V_\la$
with values in the subalgebra generated by negative root vectors.

A description of extremal vectors in Verma modules over classical Kac-Moody algebras was obtained in   \cite{MFF}
via an interpolation procedure resulting in a  calculus of polynomials with complex exponents.
Another approach based on extremal projectors \cite{AST}
 was employed by Zhelobenko in \cite{Zh}. He obtained $\theta_{\bt,m}$ for  simple Lie algebras
as a product of $m$ copies of $\theta_{\bt,1}$ with shifted weights. The idea of factorization was also used in a construction of Shapovalov elements
for contragredient Lie superalgebras  in \cite{Mus}.

Factorization of $\theta_{\bt,m}$ into a product of polynomials of lower degree is a great simplification that is convenient
for their  analysis. For example it is
good for the study of the classical limit in the case of quantum groups, which is
crucial for quantization of conjugacy classes   \cite{M2}.

With regard to quantum groups, an inductive construction of extremal vectors in Verma modules
 was suggested in  \cite{KL}. Explicit expressions for  Shapovalov elements   for the $A$-type appeared  in \cite{M3} and recently were obtained in \cite{CM} by other methods.
It is worthy to note that ordered PBW-like monomials in $\theta_{\bt,1}$ deliver an orthogonal basis in generic irreducible Verma modules \cite{M3}.

While  Zhelobenko's factorization via extremal projectors simplifies construction of Shapovalov elements in the case of  classical simple Lie algebras,
 there remains a problem of explicit description of the factors.
In this paper, we pursue  an alternative approach   based on the canonical contravariant  bilinear form on Verma modules.
It  gives  expressions for all Shapovalov elements in a factorized form through root vectors in
the classical case and through elements of the R-matrix in the adjoint representation in the case of quantum groups.

Extremal vectors generate the kernel of the canonical contravariant form on $V_\la$,
which is a specialization at $\la$ of the "universal" Shapovalov form on the Borel subalgebra \cite{Shap} with values in the Cartan subalgebra.
This contravariant form itself is extremely important and has numerous applications, see e.g. \cite{ES,EK,FTV,AL}.
For generic $\la$, the  module $V_\la$ is irreducible and  the form
is non-degenerate. The inverse form gives rise to an element $\Sc$
 of extended tensor product of positive and negative subalgebras of the (quantized) universal
 enveloping algebra \cite{M1}.
 Sending the positive leg of $\Sc$ to an auxiliary representation yields a matrix with entries in the negative
 subalgebra which we call   Shapovalov matrix.
Its explicit description    was obtained in \cite{M1} by generalization of Nagel-Moshinski
formulas for the lowering operators of $\s\l(n)$ \cite{NM}. They can also be
derived (in the quantum setting) from the ABRR equation \cite{ABRR} on  dynamical twist \cite{ES}.


Our method relates $\theta_{\bt,m}$ with certain entries
of the   Shapovalov matrix. This point of view is quite natural because  the kernel of the contravariant form on $V_\la$
results in poles of $\S$.
Our approach  not only provides a factorization of $\theta_{\bt,m}$
to a product of $\theta_{\bt,1}$ but also an efficient  description
of $\theta_{\bt,1}$ in a very elementary way, by a generalized Nagel-Moshinsky rule (\ref{norm_sing_vec})
using a technique of Hasse diagrams.
We do it by evaluating residues of matrix elements of $\Sc$
that go singular at a De Concini-Kac-Kazhdan "hyperplane".

Our approach is absolutely parallel for a classical semi-simple Lie algebra $\g$
and its quantum group $U_q(\g)$. The classical case can be processed directly or obtained as the limit case $q\to 1$
of the deformation parameter. Let us describe the  method in more detail.

With a module $V$ from the category $\O$ and a pair of non-zero
vectors $v_b,v_a\in V$
we associate a Shapovalov matrix element, $\langle v_b|v_a\rangle$, which belongs to the negative Borel (universal enveloping) subalgebra
$\hat  U_q(\b_-)$ rationally extended over
 the Cartan subalgebra.
Under   certain assumptions  on $v_b$ and $v_a$, such  matrix elements deliver factors in $\theta_{\bt,m}$.
These factors normalize positive root vectors of a reductive Lie subalgebra  $\l\subset \g$
whose negative counterparts annihilate $v_b$. This way they become lowering operators in the Mickelsson algebras of the pair $(\g,\l)$,
\cite{Mick}. When $\la$ satisfies  the De Concini-Kac-Kazhdan condition, the factors become  $\theta_{\bt,1}$ shifted by certain weights.

The vector  $v_b$ should be highest for the minimal simple Lie subalgebra in $\g$ that accommodates the root $\bt$ and
and its weight should satisfy the condition $(\nu_b,\bt)=\frac{(\bt,\bt)}{2}$. For finite dimensional $V$ the latter is equivalent to saying
that $v_b$ generates a  $2$-dimensional submodule
of the $\s\l(2)$-subalgebra generated
by the root spaces $\g_{\pm \bt}$.

The vector  $v_a$ determines a
homomorphism $V_{\la_2}\to V\tp V_{\la_1}$, where $V_{\la_i}$ are irreducible Verma modules of
highest weights $\la_i$ and $\la_2-\la_1$ equals  the weight of $v_a$.
Iteration of this construction yields a chain of homomorphisms
$$
V_{\la_{m}}\to V\tp V_{\la_{m-1}}\to \ldots \to  V^{\tp m}\tp V_{\la_0},
$$
where each mapping $V^{\tp i}\tp V_{\la_{m-i}}\to V^{\tp i}\tp (V\tp V_{\la_{m-i-1}})$ is identical on the factor $V^{\tp i}$.
We prove factorization of  $\langle v_b^{\tp m}| v_a^{\tp m}\rangle$ to
a product of $\langle v_b|v_a\rangle$. Then we demonstrate  that, under the specified conditions, that matrix element is proportional to  $\theta_{\bt,m}(\la)$
 with $\la_0=\la$.

As a result, we obtain $\theta_{\bt,m}(\la)$ as a product  $\prod_{i=0}^{m-1}\theta_{\bt,1}(\la_i)$.
The factors $\theta_{\bt,1}$ are calculated   by a
general rule (\ref{norm_sing_vec}) specialized to the case in Section 5.
Viewed as an element of $\hat  U_q(\b_-)$,
$\theta_{\bt,m}$ becomes a product of  $\theta_{\bt,1}$ shifted by the integer multiple weights of $v_b$.
This shift can be made trivial by a choice of $V$
if $\bt$ contains a
simple root $\al$   with multiplicity $1$.
Then $\theta_{\bt,m}$ becomes the $m$-th power of $\theta_{\bt,1}$.

It is worthwhile mentioning that $\theta_{\bt,1}$ can be obtained via an arbitrary auxiliary module $V$ with a pair of vectors $(v_a,v_b=e_\bt v_a)$.
They all coincide up to a scalar factor on the De Concini-Kac-Kazhdan "hyperplane" and generally differ away from it.
The problem is to use $\theta_{\bt,1}$ as a factor block for constructing $\theta_{\bt, m}$ of higher $m$.
That is why we choose $(V,v_b,v_a)$ in a special way as described above. On the other hand,
since the left tensor leg of $\Sc$ is in the positive subalgebra $U_q(\g_+)\subset U_q(\g)$,
it is the structure of  $U_q(\g_+)$-submodule on $V$ that determines $\theta_{\bt,1}$.
A  remarkable fact is that the cyclic submodule $U_q(\g_+)v_a$ in an admissible $V$  turns out to be isomorphic to
a subquotient of the $U_q(\g_+)$-module corresponding to $\g/\g_+$ in the classical limit.
This means that $\theta_{\bt,1}$ in each case can be calculated via Shapovalov matrix elements
from  $\End(\g/\g_+)\tp U_q(\b_-)$, by Theorem \ref{classifying Hasse}.

Except for Section 5, we present only the $q$-version of the theory. The classical case can be obtained by sending $q$ to $1$.
However the final expression for $\theta_{\bt,1}$ is  greatly simplified when $q=1$, so we give a special consideration to it in
the  Section 5.

\section{Preliminaries}

\label{SecPrelim}

Let  $\g$ be a  simple complex Lie algebra and  $\h\subset \g$ its Cartan subalgebra. Fix
a triangular decomposition  $\g=\g_-\op \h\op \g_+$  with  maximal nilpotent Lie subalgebras
$\g_\pm$.
Denote by  $\Rm \subset \h^*$ the root system of $\g$, and by $\Rm^+$ the subset of positive roots with basis $\Pi$
of simple roots. This basis  generates a root lattice $\Ga\subset \h^*$ with the positive semigroup $\Ga_+=\Z_+\Pi\subset \Ga$.

For a positive root $\bt\in \Rm^+$ and a simple root $\al\in \Pi$ denote by $\ell_{\al,\bt}\in \Z_+$ the multiplicity
with which $\al$ enters $\bt$, that is the $\al$-coefficient in the expansion of $\bt$ over the basis $\Pi$.

Choose an $\ad$-invariant form $(\>.\>,\>.\>)$ on $\g$, restrict it to $\h$, and transfer to $\h^*$ by duality.
For every $\la\in \h^*$ there is   a unique element $h_\la \in \h$ such that $\mu(h_\la)=(\mu,\la)$, for all $\mu\in \h^*$.
For a non-isotropic $\mu\in \h^*$  set  $\mu^\vee=\frac{2}{(\mu,\mu)}\mu$ and  $h_\mu^\vee=\frac{2}{(\mu,\mu)}h_\mu$.

Fundamental weights are denoted by $\omega_\al$, $\al \in \Pi$. They  are determined by the system of equations
$(\omega_\al, \bt^\vee)=\dt_{\al,\bt}$, for all $\al, \bt \in \Pi$.

We assume that $q\in \C$ is not a root of unity and we understand that when saying "all $q$". By almost all $q$ we mean  all $q$ excepting maybe  a finite set of values distinct
from $q=1$.

The standard Drinfeld-Jimbo quantum group $U_q(\g)$   is a complex Hopf algebra with the set of generators $e_\al$, $f_\al$, and $q^{\pm h_\al}$ labeled by simple roots $\al$ and satisfying relations \cite{D1,J}
$$
q^{h_\al}e_\bt=q^{ (\al,\bt)}e_\bt q^{ h_\al},
\quad
[e_\al,f_\bt]=\dt_{\al,\bt}[h_\al]_q,
\quad
q^{ h_\al}f_\bt=q^{-(\al,\bt)}f_\bt q^{ h_\al},\quad \al, \bt \in \Pi.
$$
The symbol $[z]_q$, where  $z\in \h+\C$, stands for $\frac{q^{z}-q^{-z }}{q-q^{-1}}$.
The elements $q^{h_\al}$ are invertible, with $q^{h_\al}q^{-h_\al}=1$, while  $\{e_\al\}_{\al\in \Pi}$ and $\{f_\al\}_{\al\in \Pi}$ also
satisfy quantized Serre relations. Their exact form is not important for this presentation, see \cite{ChP} for details.

A Hopf algebra structure on $U_q(\g)$ is introduced by the comultiplication
$$
\Delta(f_\al)= f_\al\tp 1+q^{-h_\al}\tp f_\al,\quad \Delta(q^{\pm h_\al})=q^{\pm h_\al}\tp q^{\pm h_\al},\quad\Delta(e_\al)= e_\al\tp q^{h_\al}+1\tp e_\al
$$
set up on the generators and extended as a homomorphism $U_q(\g)\to U_q(\g)\tp U_q(\g)$.
The antipode is an algebra and coalgebra anti-automorphism of $U_q(\g)$ that acts on the generators by the assignment
$$
\gm( f_\al)=- q^{h_\al}f_\al, \quad \gm( q^{\pm h_\al})=q^{\mp h_\al}, \quad \gm( e_\al)=- e_\al q^{-h_\al}.
$$
The counit homomorphism $\eps\colon U_q(\g)\to \C$ returns
$$
\eps(e_\al)=0, \quad \eps(f_\al)=0, \quad \eps(q^{h_\al})=1.
$$
We extend the notation $f_\al$, $e_\al$ to all  $\al\in \Rm^+$ meaning the Lusztig root vectors
with respect to some normal ordering of $\Rm^+$, \cite{ChP}. They are known to generate a Poincare-Birkhoff-Witt (PBW)
basis in $U_q(\g_\pm)$.

Denote by $U_q(\h)$,  $U_q(\g_+)$, and $U_q(\g_-)$  subalgebras   in $U_q(\g)$
 generated by $\{q^{\pm h_\al}\}_{\al\in \Pi}$, $\{e_\al\}_{\al\in \Pi}$, and $\{f_\al\}_{\al\in \Pi}$, respectively.
The quantum Borel subgroups are defined as $U_q(\b_\pm)=U_q(\g_\pm)U_q(\h)$; they are Hopf subalgebras in $U_q(\g)$.
We will also need their extended version $\hat U_q(\b_\pm)=U_q(\g_\pm)\hat U_q(\h)$, where
$\hat U_q(\h)$ is the ring of fractions of $U_q(\h)$ over the multiplicative system generated by
$[h_\al-c]_q$ with $\al \in \Gamma_+$ and $c\in \Qbb$.

Given a $U_q(\g)$-module $V$, a non-zero vector $v$ is said to be of weight $\mu$ if $q^{h_\al}v=q^{(\mu,\al)} v$ for all $\al\in \Pi$.
The linear span of such vectors is denoted by $V[\mu]$.
A module $V$ is said to be of highest weight $\la$ if it is generated by a weight vector $v\in V[\la]$ that
is killed by all $e_\al$. Such vector $v$ is called highest; it is defined up to a non-zero scalar multiplier.

We define an involutive coalgebra anti-automorphism and algebra automorphism $\si$ of $U_q(\g)$ setting it on the
generators by the assignment
$$\si\colon e_\al\mapsto f_\al, \quad\si\colon f_\al\mapsto e_\al, \quad \si\colon q^{h_\al}\mapsto q^{-h_\al}.$$
The involution $\omega =\gamma^{-1}\circ \si=\si \circ \gm$ is an algebra anti-automorphism of $U_q(\g)$ and preserves
the comultiplication.

A symmetric bilinear form $(\>.\>,\>.\>)$ on a $\g$-module $V$ is called contravariant if
$\bigl(x v,w\bigr)=\bigl(v,\omega(x)w\bigr)$ for all $x\in U_q(\g)$, $v,w\in V$.
A module of highest weight has a unique $\C$-valued contravariant form such that squared norm of the highest vector is $1$.
We call this form canonical and extend this term to a form on tensor products that is the product of canonical forms on tensor factors.
Such a form is contravariant because $\omega$ is a coalgebra map.

Let us recall the definition of $U_q(\h)$-valued Shapovalov form on the Borel subalgebra  $U_q(\b_-)$
that was  introduced for  $U(\g)$ and studied  in  \cite{Shap}.
Regard $U_q(\b_-)$ as a free right $U_q(\h)$-module generated by $U_q(\g_-)$.
The triangular decomposition $U_q(\g)=U_q(\g_-)U_q(\h)U_q(\g_+)$ facilitates projection
   $\wp\colon U_q(\g)\to U_q(\h)$ along the sum $\g_-U_q(\g)+U_q(\g)\g_+$, where $\g_-U_q(\g)$ and $U_q(\g)\g_+$ are
   right and left ideals generated by  positive and negative root vectors,
   respectively.
Set
$$
(x,y)=\wp\bigl(\omega(x)y\bigr), \quad x,y\in U_q(\g).
$$
Thus defined the form is $U_q(\h)$-linear and contravariant.
It follows that the left ideal $U_q(\g)\g_+$ is in the kernel, so the form descends to a
$U_q(\h)$-linear form
on the quotient  $U_q(\g)/U_q(\g)\g_+\simeq U_q(\b_-)$.

A Verma module $V_\la=U_q(\g)\tp_{U_q(\b_+)}\C_\la$ of highest weight $\la\in \h^*$ is induced from  the 1-dimensional $U_q(\b_+)$-module $\C_\la$ that is trivial on $U_q(\g_+)$ and
returns  $q^{(\la,\al)}$ on $q^{h_\al}\in U_q(\h)$, $\al \in \Pi$. Its highest vector    is denoted by  $v_\la$, which
is also called vacuum vector. It freely generates $V_\la$  over $U_q(\g_-)$.

Specialization of the Shapovalov form at $\la\in \h^*$ yields the canonical contravariant $\C$-valued form $(x,y)_\la=\la\Bigl(\wp\bigl(\omega(x)y\bigr)\Bigr)$
on $V_\la$, upon a natural  $U_q(\g_-)$-module isomorphism $U_q(\g_-)\simeq V_\la$
extending the assignment $1\mapsto v_\la$. Conversely, the canonical contravariant form
on $V_\la$ regarded as a function of $\la$ descends to the  Shapovalov form if one views $U_q(\h)$ as
the algebra of polynomial functions on $\h^*$.
By  an abuse of terminology, we also mean by Shapovalov form the canonical
contravariant form on $V_\la$.

It is known from \cite{DCK} that the contravariant form on $V_\la$  module goes degenerate if and only if
its highest weight is in the union of
\be
\Hc_{\bt,m}=\{\la\in \h^*\>|\>q^{2(\la+\rho,\bt)-m(\bt,\bt)}=1\}
\label{Kac-Kazhdan}
\ee over $\bt \in \Rm^+$ and $m\in \N$, where
$\rho=\frac{1}{2}\sum_{\al\in \Rm^+}\al$.
In the classical case $q=1$, $\Hc_{\bt,m}$ becomes  a Kac-Kazhdan hyperplane of weights satisfying $2(\la+\rho,\bt)=m(\bt,\bt)$.

Recall that a vector $v\in V_\la$ of weight $\la-\mu$ with $\mu\in \Gamma_+$, $\mu\not =0$, is called extremal if $e_\al v=0$ for all $\al \in \Pi$.
 Extremal vectors   are
in the kernel of the contravariant form and generate submodules of the corresponding highest weights.
We will be interested in the special case when $\mu=m\bt$ with $\bt\in \Rm^+$ and $m\in \N$.
Then the
highest weight $\la$ has to be in $\Hc_{\bt,m}$.
The image $\theta_{\bt,m}$ of $v$ under the isomorphism $V_\la\to U_q(\g_-)$ is called Shapovalov element of a positive root $\bt$ and degree $m$.

For simple $\bt$ the  element $\theta_{\bt,m}$  is just the $m$-th power of
the root vector, $\theta_{\bt,m}=f_\bt^m$. For non-simple $\bt$, it is a rational trigonometric function $\Hc_{\bt,m}\to U_q(\g_-)$.
The goal of this work is to find explicit expressions for $\theta_{\bt,m}$ with non-simple $\bt$.

\section{Shapovalov inverse form and its matrix elements}
Define an opposite Verma $U_q(\g)$-module $V_\la'$ of lowest weight $-\la$ as follows. The underlying vector
space of $V_\la'$ is taken to be $V_\la$, while the representation homomorphism
$\pi'_\la$ is twisted by $\si$, that is $\pi'_\la=\pi_\la\circ \si$.
The module $V_\la'$ is freely generated over $U_q(\g_+)$ by its lowest vector $v_\la'$.

Let $\si_\la\colon V_\la\to V_\la'$ denote the isomorphism of vector spaces,
$
x v_\la\mapsto \si(x) v_\la', \quad x\in U_q(\g_-).
$
It intertwines the representations homomorphisms $\pi_\la' \circ \si= \si_\la\circ \pi_\la$.
This map  relates the contravariant form on $V_\la$ with  a $U_q(\g)$-invariant pairing
$V_\la\tp V_\la'\to V_\la\tp V_\la \to \C$.

Suppose that the module $V_\la$ is irreducible. Then its invariant pairing is non-degenerate (as well as the contravariant form on $V_\la$).
The inverse form belongs to a completed tensor product $V_\la'\hat \tp V_\la$.
Under the isomorphisms $V_\la\to U_q(\g_-)$, $V_\la'\to U_q(\g_+)$, it goes to an element that we denote by $\Sc\in  U_q(\g_+)\hat \tp  U_q(\g_-)$
and call universal Shapovalov matrix. Given a $U_q(\g_+)$-locally nilpotent  $U_q(\g)$-module $V$ with representation homomorphism
$\pi\colon U_q(\g)\to \End(V)$ the image $S=(\pi\tp \id)(\Sc)$ is a matrix with entries in $U_q(\g_-)$.
It features a rational trigonometric (rational in the classical case) dependance  on  $\la\in \h^*$.
We will assume that  $V$ is diagonalizable with finite dimensional weight spaces. We will also assume that $V$ is endowed with a non-degenerate contravariant form,
for instance, if $V$ is a tensor power  of an irreducible module of highest weight. Using terminology
adopted in the quantum inverse scattering theory, we call the module $V$ auxiliary.

Varying the highest weight $\la$ we get a rational trigonometric dependance of $\Sc$. As a function
of $\la$,  $\Sc$ is  regarded as an element of $U_q(\g_+)\hat \tp \hat U_q(\b_-)$, where $\hat U_q(\b_-)$ is viewed as a right $\hat U_q(\h)$-module
freely generated by $U_q(\g_-)$. This way
the weight dependance is accommodated by the right tensor leg of $\Sc$.

An explicit expression of $S$ in a weight basis  $\{v_i\}_{i\in I}\subset V$, $v_i\in V[\nu_i]$, can be formulated in terms of  Hasse diagram,
$\mathfrak{H}(V)$. Such a diagram is
associated with any partially ordered sets. In our case the partial ordering is induced by the $U_q(\g_+)$-action on $V$.
Nodes are elements of the  basis $\{v_i\}_{i\in I}$.
Arrows are simple root vectors $e_\al$ connecting the nodes $v_i\stackrel{e_\al}{\longleftarrow} v_j$
whose weight difference is $\nu_i-\nu_j=\al$. Then a node $v_i$ is succeeding a node $v_j$ if
 $\nu_i -\nu_j\in \Gamma_+\backslash\{0\}$.
The matrix $S$ is triangular: $s_{ii}=1$ and $ s _{ij}=0$ if $\nu_i\not \succ\nu_j$. The entry $s_{ij}$
is a rational trigonometric function $\h^*\to U_q(\g_-)$ taking values in the subspace of  weight $\nu_j-\nu_i\in -\Gamma_+$.
It is also convenient to introduce a stronger partial ordering as we will explain below.

Clearly the matrix $S$ depends only on the $U_q(\b_+)$-module structure on $V$. In order to calculate a particular
element $s_{ij}$, we can choose a weight basis that extends a  basis in the cyclic submodule $U_q(\g_+)v_j$.
Then, in particular, $s_{ij}=0$ if  $v_i\not \in U_q(\g_+)v_j$.

We define a Hasse sub-diagram
$\mathfrak{H}(v_i,v_j)\subset\mathfrak{H}(V)$ that comprises  all possible routes from $v_j$ to $v_i$. A node $v_k\in\mathfrak{H}(V)$
is in $\mathfrak{H}(v_i,v_j)$ if and only if $v_i\succeq v_k\succeq v_j$.
The sub-diagram $\mathfrak{H}(v_i,v_j)$ is associated with a $U_q(\g_+)$-module $V(v_i,v_j)$ that is the quotient of $U_q(\g_+)v_j$ by the sum of cyclic submodules
 $U_q(\g_+)v_k\subset U_q(\g_+)v_j$ where $v_k\not \in  \mathfrak{H}(v_i,v_j)$. It is the module $V(v_i,v_j)$ that is needed
to calculate a matrix element $s_{ij}$.

We  recall a construction of $S$  following \cite{M1}.
Let $\{h_i\}_{i=1}^{\rk \g}\in \h$ be an orthonormal basis. The element $q^{\sum_i h_i\tp h_i}$ belongs to a completion of $U_q(\h)\tp U_q(\h)$
in the $\hbar =\ln q$-adic topology. Choose an R-matrix $\Ru$ of $U_q(\g)$ such that $\check \Ru=q^{-\sum_i h_i\tp h_i}\Ru\in U_q(\g_+)\hat \tp U_q(\g_-)$
and set $\Cc=\frac{1}{q-q^{-1}}(\check \Ru- 1\tp 1)$.
The key identity on $\Cc$ that facilitates the $q$-version of the theory is  \cite{M1}
\be
[1\tp e_{\al},\Cc] +(e_{\al}\tp q^{-h_{\al}})\Cc- \Cc (e_{\al}\tp q^{h_{\al}}) =
e_{\al}\tp [h_{\al}]_q, \quad \forall \al \in \Pi.
\label{key_id}
\ee
In the classical limit, $\Cc=\sum_{\al\in  \Rm^+} e_\al\tp f_\al$ becomes  the polarized split Casimir of $\g$ without its Cartan part.
One then recovers an identity
\be
[1\tp e_{\al},\Cc]+[e_{\al}\tp 1,\Cc] = e_{\al}\tp h_{\al}
 \label{key_id_cl}
\ee
for each simple root $\al$.

Let $c_{ij}\in U_q(\g_-)$ denote the entries of the matrix $(\pi\tp \id)(\Cc)\in \End(V)\tp U_q(\g_-)$.
We rectify the partial ordering and the Hasse diagram $\mathfrak{H}(V)$ by removing arrows $v_i\leftarrow v_j$ if $c_{ij}=0$.
This will not affect the formula (\ref{norm_sing_vec}) for matrix elements of $S$.

For each weight $\mu\in \Gamma_+$ put
\be
\eta_{\mu}=h_\mu+(\mu, \rho)-\frac{1}{2}(\mu,\mu) \in \h\op \C.
\ee
Regard $\eta_{\mu}$  as an affine function on $\h^*$ by  the assignment $\eta_\mu\colon \zt \mapsto (\mu,\zt+ \rho)-\frac{1}{2}(\mu,\mu)$, $\zt\in \h^*$.
Observe that $\eta_{m\bt}=m\bigl(h_\bt+(\bt, \rho)-\frac{m}{2}(\bt,\bt)\bigr)$. That is,  $[\eta_{m\bt}(\la)]_q$
vanishes on $\Hc_{\bt,m}$ (and only on $\Hc_{\bt,m}$ in the classical case).

For a pair of non-zero vectors $v,w\in V$ define a matrix element $\langle w|v\rangle=(w,\Sc_1v)\Sc_2\in \hat U_q(\b_-)$,
where  $\Sc_1\tp \Sc_2$ stands for  a Sweedler-like notation for   $\Sc$ and the pairing is with respect to a non-degenerate
contravariant form on $V$.
Its specialization at a weight $\la$ is denoted by $\langle w|v\rangle_\la$, which can be determined from the equality
$\langle w|v\rangle_\la v_\la= \langle w|v\rangle v_\la\in V_\la$.
For each $w$ from $V$, the map $V\to V_\la$, $v\mapsto \langle v|w\rangle v_\la$ satisfies:
$e_\al\langle  v|w\rangle v_\la=\langle \si(e_\al) v|w\rangle v_\la$ for all $\al\in \Pi$.
This is a consequence of  $\U_q(\g_+)$-invariance of the tensor $\S(1\tp v_\la)\in U_q(\g_+)\hat \tp V_\la$.

The matrix element $\langle v_i|v_j\rangle$ equals  $s_{ij}$  if $(v_i,v_k)=\dt_{ik}$ for all  $k\in I$.
It will be always the case in what follows.

Fix a "start" node $v_a$ and an "end" node $v_b$ such that $v_b\succ v_a$.
Then a re-scaled matrix element   $\check s_{ab}=-s_{ba}[\eta_{\nu_b-\nu_a}]_qq^{-\eta_{\nu_b-\nu_a}}$
can be calculated by the formula
\be
\check s_{ba}=c_{ba}+\sum_{k\geqslant 1}\sum_{v_b \succ v_k\succ \ldots \succ v_1\succ v_a}
c_{b k}\ldots
c_{1 a}\frac{(-1)^kq^{\eta_{\mu_k}}\ldots q^{\eta_{\mu_1}}}{[\eta_{\mu_k}]_q\ldots [\eta_{\mu_1}]_q} \in \hat U_q(\b_-),
\label{norm_sing_vec}
\ee
where
$
 \mu_l=\nu_{l}-\nu_{a}\in \Gamma_+$, $l=1,\ldots, k.
$
Here the summation is performed over all possible routes (sequences of ordered nodes) from $v_a$ to $v_b$, see \cite{M1} for details.

It is straightforward that $U_q(\g_+)$-invariance of the tensor $\Sc(v_a\tp v_\la)$ implies
\be
e_\al \check s_{ba}(\la)v_\la \propto [\eta_{\nu_b-\nu_a}(\la)]_q \sum_{k}\pi(e_\al)_{bk} s_{ka}(\la)v_\la.
\label{e-al-s_ij}
\ee
The matrix entries $s_{ka}(\la)$ carry weight $-(\nu_b-\nu_a-\al)$.
It follows that $\check s_{ba}(\la)v_\la$ is an extremal vector in $V_\la$ for $\la$ satisfying  $[\eta_{\nu_b-\nu_a}(\la)]_q=0$ provided
\begin{enumerate}
  \item $\check s_{ba}(\la)\not =0$,
  \item $\la$ is a regular point for all $s_{ka}(\la)$ and all  $\al$.
\end{enumerate}
We aim to find an appropriate matrix element for $\theta_{\bt,m}$ that satisfies these conditions.

    Let $V$ be a  $U_q(\g)$-module with a pair of vectors  $v_a,v_b\in V$ such that $e_\bt v_a=v_b$
    for $\bt \in \Rm^+$.
    We call the triple $(V,v_b,v_a)$ a  $\bt$-representation.

\begin{propn}
\label{Shapdeg1}
  Let $(V,v_b,v_a)$ be a $\bt$-representation for  $\bt \in \Rm^+$. Then for generic $\la\in \Hc_{\bt,1}$
   the vector $\check s_{ba}(\la)v_\la\in V_\la$ is extremal.
\end{propn}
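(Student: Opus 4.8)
The plan is to read everything off from criteria (1)--(2) preceding the proposition together with identity (\ref{e-al-s_ij}). Since $e_\bt v_a=v_b$ is non-zero, the node $v_b$ lies in the cyclic submodule $U_q(\g_+)v_a$ and has weight $\nu_b=\nu_a+\bt$, so $v_b\succ v_a$; the weight-$\bt$ component of $(\pi\tp\id)(\Cc)$ begins with $e_\bt\tp f_\bt$ (it equals $e_\bt\tp f_\bt$ at $q=1$), whence $c_{ba}=f_\bt+O(q-q^{-1})\neq0$, so $v_b\succ v_a$ holds in the rectified Hasse diagram as well and $\check s_{ba}$ obeys (\ref{norm_sing_vec}). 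Because $\nu_b-\nu_a=\bt$ we have $\eta_{\nu_b-\nu_a}=\eta_\bt$, and $[\eta_\bt(\la)]_q=0$ precisely on $\Hc_{\bt,1}$; hence for every $\la\in\Hc_{\bt,1}$ at which all the $s_{ka}(\la)$ are regular, the right-hand side of (\ref{e-al-s_ij}) vanishes and $e_\al\,\check s_{ba}(\la)v_\la=0$ for all $\al\in\Pi$. It therefore remains to exhibit a dense open subset of $\Hc_{\bt,1}$ on which both (2) $\la$ is a regular point of $\check s_{ba}$ and of all $s_{ka}$, and (1) $\check s_{ba}(\la)\neq0$.

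For (2): by (\ref{norm_sing_vec}) every pole of $\check s_{ba}$ or of $s_{ka}$ lies on a hyperplane $\{\,[\eta_\mu(\la)]_q=0\,\}$ with $\mu$ an intermediate route weight, hence $0\precneqq\mu\precneqq\bt$ in $\Ga_+$. Such a $\mu$ is never a scalar multiple of $\bt$: a positive root is a primitive vector of the root lattice $\Ga$ (it is $W$-conjugate to a simple root, and $W$ acts on $\Ga$ by automorphisms), so no $c\bt$ with $0<c<1$ lies in $\Ga$. Consequently none of these finitely many hyperplanes coincides with, or contains, $\Hc_{\bt,1}$, and deleting their traces from $\Hc_{\bt,1}$ leaves the desired dense open set.

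The substantive point is (1). Expanding $\check s_{ba}(\la)$ in the PBW basis of $U_q(\g_-)[-\bt]$, the seed $c_{ba}$ contributes the single Lusztig monomial $f_\bt$ with coefficient $1+O(q-q^{-1})$ (exactly $1$ at $q=1$), while each route product $c_{bk}\cdots c_{1a}$ with $k\geq1$ is a product of at least two root vectors of total weight $-\bt$ and so contributes to the $f_\bt$-coefficient only a $\C$-linear combination of the scalars $(-1)^k q^{\sum_l\eta_{\mu_l}}/\prod_l[\eta_{\mu_l}]_q$. Hence the $f_\bt$-coefficient of $\check s_{ba}$ has the form $1+\varphi(\la)$, where $\varphi$ is a rational trigonometric function all of whose poles lie on the hyperplanes of step (2), i.e.\ off $\Hc_{\bt,1}$; thus $1+\varphi$ restricts to $\Hc_{\bt,1}$ as a rational function, regular on a dense open subset. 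One must rule out $1+\varphi\equiv0$ there: if it held, $\varphi|_{\Hc_{\bt,1}}\equiv-1$ would be everywhere regular, so the pole terms of the individual routes would have to cancel along $\Hc_{\bt,1}$; inspecting (\ref{norm_sing_vec}) route by route, the residue of $\varphi$ along a trace $\{\,[\eta_{\mu_l}(\la)]_q=0\,\}\cap\Hc_{\bt,1}$ is expressible through strictly shorter matrix elements that are regular there, which precludes the cancellation. Granting this, $\check s_{ba}(\la)\neq0$ on a dense open subset of $\Hc_{\bt,1}$; together with $e_\al\,\check s_{ba}(\la)v_\la=0$ and the fact that $\check s_{ba}(\la)v_\la$ has weight $\la-\bt$ with $\bt\neq0$, this exhibits it as an extremal vector of $V_\la$. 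I expect the non-vanishing in (1) to be the hardest step: one has to use the combinatorics of the routes in (\ref{norm_sing_vec}) to exclude an accidental cancellation of the route corrections against the leading $f_\bt$ exactly on the Kac--Kazhdan hyperplane $\Hc_{\bt,1}$.
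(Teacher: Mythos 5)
Your overall strategy is the one the paper uses: verify the two conditions listed before the proposition so that (\ref{e-al-s_ij}) forces $e_\al\, \check s_{ba}(\la)v_\la=0$ for $\la\in \Hc_{\bt,1}$. Your treatment of condition (2) is correct and in fact more detailed than the paper's: the intermediate weights $\mu$ of any route satisfy $0\prec\mu\prec\bt$ in $\Ga_+$, and since a root is a primitive vector of $\Ga$ no such $\mu$ can be collinear with $\bt$, so the singular sets $\{[\eta_\mu(\la)]_q=0\}$ cut $\Hc_{\bt,1}$ only in a proper closed subset. The paper simply asserts the non-collinearity.

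The genuine gap is in condition (1). You rightly note --- more carefully than the paper --- that a route product $c_{bk}\cdots c_{1a}$, once reordered into the PBW basis, can in principle contribute to the coefficient of the monomial $f_\bt$, so that this coefficient has the form $c_0+\varphi(\la)$ with $c_0\neq 0$ constant and $\varphi$ a sum of route terms; you must then exclude $c_0+\varphi\equiv 0$ on $\Hc_{\bt,1}$. Your argument for this --- that the residue of $\varphi$ along a trace $\{[\eta_{\mu_l}(\la)]_q=0\}\cap\Hc_{\bt,1}$ is ``expressible through strictly shorter matrix elements that are regular there, which precludes the cancellation'' --- is a non sequitur: exhibiting a regular expression for a residue shows at most that the residue is finite, not that it is non-zero; and even if every residue along every trace did vanish, you would still have to exclude that the then-regular function $\varphi$ restricts to the constant $-c_0$ on $\Hc_{\bt,1}$. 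So the step you yourself flag as the hardest, and introduce with ``Granting this,'' is not established. The paper closes this point by a different (one-line) argument: the term $c_{ba}$ is the only one involving the Lusztig generator $f_\bt$ of the PBW basis, which makes it linearly independent of all the route contributions, so $\check s_{ba}(\la)\neq 0$ wherever it is regular. Your reordering concern is a legitimate question to raise about that one line, but the residue argument you offer in its place does not fill the hole; you would need either to prove that the route terms genuinely cannot produce $f_\bt$ in the chosen PBW order, or to supply an honest analytic argument (e.g.\ an asymptotic one along a suitable direction inside $\Hc_{\bt,1}$) showing that $\varphi$ cannot be identically equal to $-c_0$ there.
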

\begin{proof}
  The factors $\frac{q^{\eta_{\mu_k}}}{[\eta_{\mu_k}]_q}$  in (\ref{norm_sing_vec})  go singular on the union of a finite number of
  the null-sets $\{\la\in \h^*\>|\>[\eta_{\mu_k}(\la)]_q=0\}$. None of   $\mu_k$ is collinear to $\bt$, hence $\check s_{ba}(\la)$ is regular at generic $\la\in \Hc_{\bt,1}$.
  By the same reasoning, all $s_{ka}(\la)$ in (\ref{e-al-s_ij}) are regular at such $\la$. Finally,
  the first term  $c_{ba}$ (and only this one) involves the Lusztig root vector $f_\bt$, a generator of a PBW basis
  in $U_q(\g_-)$. It is therefore independent of the
  other terms, and  $\check s_{ba}(\la)\not=0$.
\end{proof}

Upon identification of $\hat U_q(\b_-)$ with  rational $U_q(\g_-)$-valued functions on $\h^*$  we
conclude that $\check s_{ba}$ is a Shapovalov element  $\theta_{\bt,1}$ and denote it by  $\theta_\bt$.
Uniqueness of extremal vector of given weight implies that all matrix elements $\check s_{ba}$ with $v_b=e_\bt v_a$ deliver the same $\theta_\bt$, up to a scalar factor. However, they are generally different at $\la\not \in \Hc_{\bt,m}$. When we aim at $\theta_{\bt,m}$ with $m>1$, we have to choose matrix elements for $\theta_\bt$ more carefully in order to use them as building blocks.

Note that it was relatively easy to secure the above two conditions in the case of $m=1$. For higher $m$ we will opt a different strategy:
we will satisfy the first condition by the very construction and bypass a proof of the second with different arguments.

\section{Factorization of Shapovalov elements}

For a positive root  $\bt\in \Pi$ denote by  $\Pi_\bt\subset \Pi$ the set of simple roots entering
the expansion of $\bt$ over the basis $\Pi$ with  positive coefficients.
A simple Lie subalgebra, $\g(\bt)\subset \g$, generated by
$e_\al,f_\al$ with $\al\in \Pi_\bt$ is called  support of $\bt$. Its universal enveloping algebra is quantized as a Hopf subalgebra in $U_q(\g)$.
\begin{definition}
  Let  $\bt \in \Rm^+$ be a positive root and $(V,v_b,v_a)$ a $\bt$-representation such that  $e_\al v_b=0$ for all $\al \in \Pi$,
   and $(\nu_b,\bt^\vee)=1$. We call such $\bt$-representation admissible.
\end{definition}

If a triple is $(V,v_b,v_a)$ is  admissible then $v_b$ is the highest vector of a $U_q(\g)$-submodule in $V$.
For finite dimensional  $\dim V<\infty$, $v_b$ generates a 2-dimensional submodule of the $\s\l(2)$-subalgebra generated by $f_\bt,e_\bt$.
The vector $v_b$   can be included in an orthonormal basis in $V$, as required.

 \begin{lemma}
\label{matr_factorization}
Let $(V,v_b,v_a)$ be an admissible $\bt$-representation.
Set $v_b^m=   v_b^{\tp m}\in V^{\tp m}$ for $m\in \N$.
Pick up $\la\in \h^*$ such that all Verma modules $V_{\la_k}$  with $\la_k=\la+k\nu_a$, $k =0, \ldots, m-1$, are irreducible.
Then  there is $v_a^m \in V^{\tp m}$ of weight $m \nu_a$ such that
  \be
\langle v_b^m| v_a^m\rangle_{\la_0}  = \langle v_b|v_a\rangle_{\la_{m-1}}\ldots \langle v_b|v_a\rangle_{\la_0}.
\label{factor-root-degree}
\ee
\end{lemma}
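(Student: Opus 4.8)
The plan is to prove the factorization by induction on $m$, reducing the tensor-power statement to a single application of the $U_q(\g_+)$-invariance property of $\Sc(1\tp v_\la)$ that was recorded just before Proposition \ref{Shapdeg1}. First I would set up the base case $m=1$, which is trivial with $v_a^1=v_a$. For the inductive step, recall that the universal Shapovalov matrix is comultiplicative in the auxiliary leg: if $V=V'\tp V''$ then $\Sc$ for $V$ is obtained by coproduct-type composition of the copies of $\Sc$ for $V'$ and $V''$, because $\Sc$ is built from the R-matrix (equivalently, from $\Cc$), which satisfies the standard cabling identities $\Cc_{12,3}=\Cc_{13}+\Cc_{23}$ (after conjugation by the Cartan factor) together with the quasi-triangular axioms. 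Concretely, the chain of homomorphisms
$$
V_{\la_m}\to V\tp V_{\la_{m-1}}\to\ldots\to V^{\tp m}\tp V_{\la_0}
$$
described in the introduction realizes exactly this cabling, and I would phrase the argument through those intertwiners.

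The key step is the following. Given the admissible $\bt$-representation $(V,v_b,v_a)$, the vector $v_a$ determines, via $v\mapsto \langle v|v_a\rangle v_\la$ and the invariance relation $e_\al\langle v|v_a\rangle v_\la=\langle \si(e_\al)v|v_a\rangle v_\la$, a $U_q(\g)$-module map $\phi_{\la}\colon V_{\la+\nu_a}\to V\tp V_\la$ sending the highest vector $v_{\la+\nu_a}$ to $\sum_i v_i\tp (s_{ia}(\la)v_\la)$ (here one uses that $\nu_a$ is the weight of $v_a$ and that $v_a$ sits in an orthonormal weight basis). Iterating, $\phi^{(m)}_{\la_0}\colon V_{\la_{m}'}\to V^{\tp m}\tp V_{\la_0}$ where I must check the weight bookkeeping: the composite intertwiner raises the highest weight by $m\nu_a$ at the Verma end, which forces the shifts $\la_k=\la+k\nu_a$ appearing in the statement; the irreducibility hypothesis on each $V_{\la_k}$ guarantees each $\phi_{\la_k}$ is defined (no lower-weight obstruction) and that the relevant Shapovalov forms are nondegenerate so that $\Sc$ specializes. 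Then I would pair the image of the highest vector against $v_b^m=v_b^{\tp m}$ using the product contravariant form on $V^{\tp m}$: because the form is multiplicative over tensor factors and $v_b$ is highest (the admissibility condition $e_\al v_b=0$ is what makes $v_b^{\tp m}$ behave well under the cabled coproduct — it gets hit only by the "diagonal" terms), the pairing collapses to the ordered product $\langle v_b|v_a\rangle_{\la_{m-1}}\cdots\langle v_b|v_a\rangle_{\la_0}$ acting on $v_{\la_0}$. Defining $v_a^m$ to be the element of $V^{\tp m}[m\nu_a]$ obtained from the cabled tensor (i.e. the appropriate $\Sc$-image pattern, equivalently $\phi^{(m)}$ applied to the top vector read in $V^{\tp m}$), one gets $\langle v_b^m|v_a^m\rangle_{\la_0}v_{\la_0}$ equal to the same product, which is (\ref{factor-root-degree}).

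The main obstacle I anticipate is the careful handling of the Cartan/weight factors $q^{\pm h_\al}$ that appear in the coproduct $\Delta(e_\al)=e_\al\tp q^{h_\al}+1\tp e_\al$ and in $\Cc$: when one cables $\Sc$ across the tensor factors, the right leg (in $\hat U_q(\b_-)$) of each copy gets conjugated by the Cartan part coming from the weights of the vectors to its left, and this is precisely the mechanism that produces the \emph{shifted} arguments $\la_{k}=\la+k\nu_a$ rather than a naive $m$-fold product at a single $\la$. I would isolate this as a lemma: for $v\in V[\nu]$ and $w\in V_\mu$, one has $\langle v\tp w|\,\cdot\,\rangle$ factoring as $\langle v|\,\cdot\,\rangle$ composed with $\langle w|\,\cdot\,\rangle_{\mu+\nu}$ up to the correct $q^{h}$-twist, using the key identity (\ref{key_id}) to commute $e_\al$ past $\Cc$ in the tensor factor. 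Once that bookkeeping is pinned down, the induction and the collapse of the contravariant pairing (thanks to $e_\al v_b=0$) are routine; the admissibility condition $(\nu_b,\bt^\vee)=1$ is not needed here — it will only enter later, when one identifies the product with $\theta_{\bt,m}$ — so for this lemma I only use that $v_b$ is highest and orthonormalizable.
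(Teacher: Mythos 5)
Your proposal is correct and follows essentially the same route as the paper: the same chain of intertwiners $V_{\la_m}\to V\tp V_{\la_{m-1}}\to\ldots\to V^{\tp m}\tp V_{\la_0}$, the same inductive collapse of the pairing against $v_b^{\tp m}$ using only that $v_b$ is a highest (orthonormalizable) vector, and the same absorption of the residual Cartan scalar (the paper's $q^{-(\bt,\nu_b)}$ per step) into the definition of $v_a^m$. The Cartan bookkeeping you flag as the main obstacle is exactly what the paper's explicit Sweedler-notation computation carries out.
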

\begin{proof}
Let $\la$ satisfy the required conditions.
There is an equivariant map $\varphi_k\colon V_{\la_k}\to V\tp V_{\la_{k-1}}$ sending the
highest vector $v_{\la_k}$ to an extremal vector $\Sc(v_a\tp v_{\la_{k-1}})\in V\tp V_{\la_{k-1}}$.
Here $\Sc$ is the universal Shapovalov matrix  of $V_{\la_{k-1}}$.
Consider a chain of module homomorphisms
$$
V_{\la_{m}}\stackrel{\varphi_m}{\longrightarrow} V\tp V_{\la_{m-1}}\stackrel{\id_1\tp \varphi_{m-1}}{\longrightarrow} V\tp (V\tp V_{\la_{m-2}})\to \ldots
\stackrel{\id_{m-1}\tp \varphi_1}{\longrightarrow}  V^{\tp (m-1)}\tp (V\tp V_{\la_0}),
$$
where  $\id_k$ are the identity operators on $V^{\tp k}$.
The vector $v_{\la_m}$ eventually goes over to   $\Sc(\tilde v^m_a\tp v_{\la_0})$, where $\tilde v^m_a\in V^{\tp m}$ is  of weight $m\nu_a$.
It is related with $v_a^{\tp m}$
by an invertible operator  from $\End(V^{\tp m})$, which is  $m-1$-fold dynamical twist \cite{ES}.

Let us  calculate  $\langle v_b^{ m}|\tilde v^m_a\rangle_{\la_0}$ by
pairing the tensor leg of $\Sc(\tilde v^m_a\tp v_{\la_0})$ with $v_b^{m}=v_b\tp v_b^{m-1}$.
Using  equality   $\Sc(\tilde v^m_a\tp v_{\la_0})=\Sc\bigl( v_a\tp  \Sc(\tilde v^{m-1}_a\tp v_{\la_0})\bigr)$
we reduce $\langle v_b^{ m}|\tilde v^m_a\rangle_{\la_0}$ to
$$
\bigl(v_b^{m-1},    \langle v_b| v_a \rangle_{\la_{m-1}}^{(1)}\Sc_{1} \tilde v^{m-1}_a \bigr) \> \langle v_b| v_a \rangle_{\la_{m-1}}^{(2)}\> \Sc_{2}(\la_{0})
=
\langle v_b| v_a \rangle_{\la_{m-1}}^{(2)}\Bigl\langle \omega\bigl(\langle v_b| v_a \rangle_{\la_{m-1}}^{(1)}\bigr)v_b^{m-1}|\tilde v^{m-1}_a\Bigr\rangle_{\la_0},
$$
where we use the Sweedler notation $\Delta(x)=x^{(1)}\tp x^{(2)}\in U_q(\b_-)\tp U_q(\g_-)$ for the coproduct of $x\in U_q(\g_-)$.
Since $yq^{h_\al} v_b=\eps(y)q^{(\al,\bt)}  v_b$ for all $y \in U_q(\g_+)$ and $\al \in \Ga_+$,  we
arrive at
$$
\langle v_b^{m}|\tilde v^m_a\rangle_{\la_0}=q^{-(\bt,\nu_b)}\langle v_b |v_a\rangle_{\la_{m-1}}\langle v_b^{m-1}|\tilde v^{m-1}_a\rangle_{\la_0}.
$$
Proceeding by induction on $m$ we conclude that $\langle v_b^{ m}|\tilde v^m_a\rangle_{\la_0}$ equals the right-hand side of (\ref{factor-root-degree}),
up to the factor $q^{-m(\bt,\nu_b)}$. Finally, set $v^m_a=q^{m(\bt,\nu_b)} \tilde v^m_a$.
 This proves the lemma for generic and hence for all $\la$ where the right-hand side of (\ref{factor-root-degree}) makes sense.
\end{proof}
It follows from the above factorization that the least common denominator of the    extremal vector
$
u=\Sc (v^m_a \tp v_{\la})\in V^{\tp m}\tp V_{\la}
$
contains
$$
d(\la)=[\eta_\bt(\la+(m-1)\nu_a)]_q=[(\la+\rho,\bt)-\frac{m}{2}(\bt,\bt)]_q.
$$
It comes from the leftmost factor $\langle v_b|v_a\rangle_{\la_{m-1}}$
in the right-hand side of (\ref{factor-root-degree}).
Denote by  $s_{v_b^m,v_a^m}(\la)$ the matrix element $\langle v_b^m| v_a^m\rangle_{\la}$.
Since $d$ divides $[\eta_{m\bt}]_q$, the re-scaled matrix element
$$
\check s_{v_b^m,v_a^m}(\la)=c(\la)d(\la)s_{v_b^m,v_a^m}(\la)\propto  \prod_{k=0}^{m-1}\theta(\la_k),
$$
where $c(\la)=-q^{-\eta_{m\bt}(\la_{m-1})}\frac{[\eta_{m\bt}(\la)]_q}{d(\la)}$, is regular and does not vanish at generic $\la \in \Hc_{\bt,m}$
because $d(\la)$ cancels the pole in $\langle v_b|v_a\rangle_{\la_{m-1}}$.
Put  $\check u =d^k(\la) u$, where  $k\geqslant 1$ is the maximal degree of this  pole in $u$. It
is an extremal vector in $V^{\tp m}\tp V_\la$ that is regular at generic $\la\in \Hc_{\bt,m}$.

Indeed, let $\Hc_\mu$ denote the null set $\{\la\in \h^*|[\eta_\mu(\la)]_q=0\}$ for $\mu \in \Gamma_+$.
Then  the $V_\la$-components of $\check u$ may have poles only at $\la\in  \cup_{\mu<\bt}\Hc_\mu$.
But each $\mu$ is either not collinear to $\bt$
or $\mu=l\bt$ with $l<m$. In both cases the complement to $\Hc_{\bt,m}\cap \Hc_\mu$ is dense in $\Hc_{\bt,m}$ because $q$ is not a root of unity.
\begin{propn}
\label{Prop_factoriztion}
  For generic $\la\in \Hc_{\bt,m}$,
    $\theta_{\bt,m}(\la) \propto \check s_{v_b^m,v_a^m}(\la)$.
\end{propn}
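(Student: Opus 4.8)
The plan is to show that for generic $\la\in\Hc_{\bt,m}$ the vector $\check s_{v_b^m,v_a^m}(\la)v_\la\in V_\la$ is a non-zero extremal vector of weight $\la-m\bt$; since extremal vectors of a fixed weight in a Verma module span a space of dimension at most one, this forces $\check s_{v_b^m,v_a^m}(\la)v_\la\propto\theta_{\bt,m}(\la)v_\la$, and pulling the proportionality back along $U_q(\g_-)\simeq V_\la$ gives the statement. The mechanism I would use is the extremality criterion stated right after (\ref{e-al-s_ij}), applied to the pair of nodes $v_a^m\prec v_b^m$ of the Hasse diagram $\mathfrak{H}(V^{\tp m})$: the rescaled matrix element produced there coincides, up to an invertible scalar function of $\la$, with $\check s_{v_b^m,v_a^m}$, so the criterion reduces the claim to checking that, at generic $\la\in\Hc_{\bt,m}$, \emph{(i)} $\check s_{v_b^m,v_a^m}(\la)\neq 0$ and \emph{(ii)} $\la$ is a regular point of every matrix element $s_{v_k,v_a^m}$ whose node $v_k$ has weight $m\nu_b-\al$, for each $\al\in\Pi$.

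Condition \emph{(i)} is already in hand: from the paragraph preceding the statement, $\check s_{v_b^m,v_a^m}(\la)$ is regular at generic $\la\in\Hc_{\bt,m}$ and proportional there to $\prod_{k=0}^{m-1}\theta_\bt(\la_k)$, which is non-zero because $U_q(\g_-)$ is a domain and each $\theta_\bt(\la_k)$ is non-zero. For condition \emph{(ii)} I would read regularity off the formula (\ref{norm_sing_vec}): $\check s_{v_k,v_a^m}$ is a sum over routes $v_a^m\prec\cdots\prec v_k$, each contributing denominators $[\eta_{\mu_l}(\la)]_q$ with $\mu_l=\nu_l-\nu_{v_a^m}$ the weights of the intermediate nodes $v_l$, and at a generic point of $\Hc_{\bt,m}$ such a denominator vanishes only for $\mu_l=m\bt$ --- indeed $\eta_\mu$ is non-constant on $\Hc_{\bt,m}$ unless $\mu\in\Z\bt$, and for $\mu=\ell\bt$ it equals $\frac{\ell(m-\ell)}{2}(\bt,\bt)$ on $\Hc_{\bt,m}$, whose $q$-bracket vanishes only for $\ell=m$ since $q$ is not a root of unity, while for all other $\mu$ the set $\Hc_\mu\cap\Hc_{\bt,m}$ is a proper subset of $\Hc_{\bt,m}$. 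But $\mu_l=m\bt$ forces $\nu_l=m\nu_b$, which cannot occur on a route ending at $v_k$ because $\nu_{v_k}=m\nu_b-\al$ lies strictly below $m\nu_b$. Thus $\check s_{v_k,v_a^m}$ is regular at generic $\la\in\Hc_{\bt,m}$; as $s_{v_k,v_a^m}$ differs from it by the factor $[\eta_{m\bt-\al}(\la)]_q^{-1}$ up to a regular function, and $[\eta_{m\bt-\al}(\la)]_q$ is non-zero at generic $\la\in\Hc_{\bt,m}$ (since $m\bt-\al\neq m\bt$), the matrix element $s_{v_k,v_a^m}$ is regular there as well, which is \emph{(ii)}.

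The one genuine obstacle is exactly this regularity of the ``off-diagonal'' elements $s_{v_k,v_a^m}$ along $\Hc_{\bt,m}$: for $m=1$ (Proposition \ref{Shapdeg1}) it was immediate from the weights of the intermediate nodes, but for $m>1$ it truly needs the combinatorics of (\ref{norm_sing_vec}) together with the genericity of $\la$ within $\Hc_{\bt,m}$, which is what excludes the spurious coincidences $\Hc_\mu\cap\Hc_{\bt,m}$ with $\mu\neq m\bt$. A parallel route, which I may present instead, runs through the extremal vector $\check u=d^k(\la)u$ of $V^{\tp m}\tp V_\la$ already constructed: the same pole count shows $u=\Sc(v_a^m\tp v_\la)$ has at worst a simple pole at a generic point of $\Hc_{\bt,m}$, so $k=1$ and $\check u$ is regular there, and its $V^{\tp m}$-leg is then supported only in weights $\succeq m\nu_b$; hence its contraction against any weight-$(m\nu_b-\al)$ vector, in particular against $\si(e_\al)v_b^m$, vanishes, which makes $\langle v_b^m|\check u\rangle\propto\check s_{v_b^m,v_a^m}(\la)v_\la$ extremal. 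Either way, extremality together with \emph{(i)} and the weight $\la-m\bt$ finishes the argument by uniqueness of the extremal vector of that weight.
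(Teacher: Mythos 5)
Your proposal is correct, but at the decisive step it takes a genuinely different route from the paper. The paper explicitly declines to verify condition (2) of the extremality criterion for $m>1$ (see the remark closing Section 3): instead it observes that the $V_\la$-components of the regularized extremal vector $\check u=d^k(\la)u\in V^{\tp m}\tp V_\la$ span a $U_q(\g_+)$-submodule of $V_\la$, whose maximal-weight vector is extremal and distinct from $v_\la$ (its coefficient $d^k(\la)$ vanishes on $\Hc_{\bt,m}$); by uniqueness of the extremal vector for generic $\la\in\Hc_{\bt,m}$ that vector is $\theta_{\bt,m}(\la)v_\la$ of weight $\la-m\bt$, the bottom weight of the expansion, so all intermediate components vanish, forcing $k=1$ and leaving only the $v_b^m$-component $\propto\check s_{v_b^m,v_a^m}(\la)v_\la$. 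You instead prove regularity of the off-diagonal entries $s_{v_k,v_a^m}$ head-on from the route summation (\ref{norm_sing_vec}), via the observation that an intermediate weight $\mu_l$ on a route ending below $m\nu_b$ cannot equal $m\bt$, while $[\eta_{\ell\bt}]_q=\bigl[\tfrac{\ell(m-\ell)}{2}(\bt,\bt)\bigr]_q\neq0$ for $0<\ell<m$ and the non-collinear $\Hc_\mu$ meet $\Hc_{\bt,m}$ properly. That computation is sound and shows the obstacle the paper chose to sidestep can in fact be overcome directly; what the paper's softer argument buys is independence from any analysis of intermediate matrix elements, plus the vanishing of all intermediate tensor legs of $\check u$ as a by-product. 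Your ``parallel'' second route is closer to the paper's in form but still grounds $k=1$ and the support claim in the same denominator analysis rather than in uniqueness. One cosmetic point: the blanket claim that extremal vectors of a fixed weight in a Verma module span at most a line is the Verma--BGG uniqueness theorem; the paper invokes it only for generic $\la$, which is all you need.
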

\begin{proof}
The singular vector $\check u$ is presentable as
$$
\check u= v^m_a \tp d^k(\la) v_{\la}+\ldots + v^m_b\tp d^{k-1}(\la)c(\la) \check s_{v_b^m,v_a^m}(\la)v_{\la}.
$$
We argue that
$\check u=   v^m_b\tp c(\la) \check s_{v_b^m,v_a^m}(\la)v_{\la}$ for generic $\la$ in $\Hc_{\bt,m}$, where $d(\la)=0$.
Indeed, the $V_{\la}$-components of $\check u$ span a $U_q(\g_+)$-submodule in $V_\la$.
A vector of maximal weight in this span is extremal and  distinct from $v_{\la}$. But
$\theta_{\bt,m}(\la)v_{\la}$ is the only, up to a factor,  extremal vector in $V_{\la}$, for generic $\la$.
Therefore $k=1$ and $\theta_{\bt,m}\propto \check s_{v_b^m,v_a^m}$.
\end{proof}
An admissible $\bt$-representation can be associated with every simple root $\al\in \Pi_\bt$ if one
sets
$V$ to be the irreducible module of highest weight $\frac{(\bt,\bt)}{\ell(\al,\al)}\omega_\al$, where $\ell=\ell_{\al,\bt}$ is the
multiplicity of $\al$ with which it enters $\bt$. We denote this module by $V_{\al,\bt}$.
It is finite dimensional if $\frac{(\bt,\bt)}{\ell(\al,\al)}\in \N$.
Otherwise it is a parabolic Verma module relative to a Levi subalgebra   with the root basis $\Pi\backslash \{\al\}$, cf. the next section.

 One can pass to the "universal form" of $\theta_{\bt}$ regarding it as an element of $\hat U_q(\b_-)$.  Then
\be
 \theta_{\bt,m}=  (\tau_{\nu_b}^{m-1}\theta_\bt) \>\ldots \> (\tau_{\nu_b}\theta_\bt)\>\theta_\bt,
\label{factor-root-degre-un}
\ee
where $\tau_\nu$ is an automorphism of $\hat U_q(\h)$ generated by the affine shift of $\h^*$ by the weight $\nu$,
that is,
$(\tau_\nu\varphi)(\mu) = \varphi(\mu+\nu)$, $\varphi\in \hat U_q(\h)$,   $\mu\in \h^*$.
One may ask when the shift is trivial, $\tau_{\nu_b}\theta_{\bt}=\theta_{\bt}$, and   $\theta_{\bt,m}$ is just the $m$-th power of $\theta_\bt$.
\begin{propn}
\label{plain-power}
  Let $\bt$ be a positive root. Suppose that there is $\al\in \Pi_\bt$  with $\ell_{\al,\bt}=1$. Then $\theta_{\bt,m}=\theta_{\bt}^m\in U_q(\b_-)$.
\end{propn}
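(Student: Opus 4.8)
The strategy is to use the factorization formula (\ref{factor-root-degre-un}) and show that, under the hypothesis $\ell_{\al,\bt}=1$, the weight shift $\tau_{\nu_b}$ acts trivially on $\theta_\bt\in\hat U_q(\b_-)$. By Proposition~\ref{Prop_factoriztion} together with (\ref{factor-root-degre-un}) we have $\theta_{\bt,m}=(\tau_{\nu_b}^{m-1}\theta_\bt)\cdots(\tau_{\nu_b}\theta_\bt)\,\theta_\bt$, so the claim reduces to the single identity $\tau_{\nu_b}\theta_\bt=\theta_\bt$. The key observation is that we are free to choose the admissible $\bt$-representation: for the simple root $\al\in\Pi_\bt$ with $\ell_{\al,\bt}=1$, take $V=V_{\al,\bt}$, the irreducible module of highest weight $\frac{(\bt,\bt)}{\ell(\al,\al)}\omega_\al=\frac{(\bt,\bt)}{(\al,\al)}\omega_\al$ (using $\ell=1$). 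The point is to compute $\nu_b$, the weight of $v_b$, for this particular choice and check that $\tau_{\nu_b}$ acts as the identity on the relevant fraction ring.

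\textbf{Main steps.} First I would determine $\nu_b$. Since $v_b=e_\bt v_a$ has weight $\nu_b=\nu_a+\bt$ and, in $V_{\al,\bt}$, the vector $v_b$ is highest (admissibility), $v_b$ must be the highest weight vector, so $\nu_b=\frac{(\bt,\bt)}{(\al,\al)}\omega_\al$. Now recall that $\theta_\bt$ is a function $\h^*\to U_q(\g_-)$ taking values in the weight space of weight $-\bt$, i.e. $\theta_\bt(\la)\in U_q(\g_-)[-\bt]$; equivalently, as an element of $\hat U_q(\b_-)$ its $U_q(\h)$-coefficients are rational trigonometric functions that in the classical picture are polynomials in the $h_\gamma$. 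The shift $\tau_{\nu_b}$ replaces the variable $\zt\in\h^*$ by $\zt+\nu_b$. The second step is to show that $\theta_\bt(\la)$ as produced by formula (\ref{norm_sing_vec})—equivalently the rescaled matrix element $\check s_{ba}$—actually depends on $\la$ only through the pairings $(\la+\rho,\gamma)$ for roots $\gamma$ in the support $\g(\bt)$, or more precisely only through $\eta_{\mu_l}(\la)=(\mu_l,\la+\rho)-\tfrac12(\mu_l,\mu_l)$ with $\mu_l\in\Gamma_+$ supported on $\Pi_\bt$. The third step is then to verify $(\mu_l,\nu_b)=0$ for all such $\mu_l$ that actually occur in the sum: because $\nu_b$ is a multiple of $\omega_\al$ and $(\omega_\al,\gamma^\vee)=\dt_{\al,\gamma}$ counts the $\al$-multiplicity, $(\mu_l,\nu_b)$ is proportional to the $\al$-coefficient of $\mu_l$. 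Since every route node $v_l$ in the Hasse diagram $\mathfrak{H}(v_b,v_a)$ has weight $\nu_l$ strictly between $\nu_a$ and $\nu_b$, and $\nu_b-\nu_a=\bt$ has $\al$-multiplicity $\ell_{\al,\bt}=1$, each intermediate weight difference $\mu_l=\nu_l-\nu_a$ has $\al$-multiplicity either $0$ or $1$; the value $1$ is attained only by $\mu_l=\bt$ itself, which does not occur as an intermediate node ($v_l\prec v_b$ strictly). Hence $(\mu_l,\nu_b)=0$ for all intermediate $\mu_l$, so $\eta_{\mu_l}(\la+\nu_b)=\eta_{\mu_l}(\la)$, and every factor $q^{\eta_{\mu_l}}/[\eta_{\mu_l}]_q$ in (\ref{norm_sing_vec}) is invariant under $\tau_{\nu_b}$. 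The Lusztig root vectors $c_{lk}\in U_q(\g_-)$ are $\la$-independent, so the whole expression $\check s_{ba}$, hence $\theta_\bt$, is fixed by $\tau_{\nu_b}$.

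\textbf{Conclusion and the main obstacle.} With $\tau_{\nu_b}\theta_\bt=\theta_\bt$ established, (\ref{factor-root-degre-un}) collapses to $\theta_{\bt,m}=\theta_\bt^{\,m}$, and since $\theta_\bt\in U_q(\b_-)$ (the extremal vector $\theta_\bt v_\la\in V_\la$ lives in the unextended module, cf. Proposition~\ref{Shapdeg1} being valid at generic $\la\in\Hc_{\bt,1}$ where no denominators survive), the power lies in $U_q(\b_-)$ as claimed. The delicate point—the main obstacle—is justifying that $\theta_\bt$ may be computed from the specific admissible representation $V_{\al,\bt}$ and that the resulting universal element in $\hat U_q(\b_-)$ is the same as for any other admissible choice: this is exactly the content of the uniqueness-of-extremal-vector remark following Proposition~\ref{Shapdeg1}, but one must be careful that "coincide up to scalar on $\Hc_{\bt,1}$" is upgraded to an honest equality of the normalized universal elements used in the factorization. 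This is handled by noting that Lemma~\ref{matr_factorization} and Proposition~\ref{Prop_factoriztion} were proved using precisely this $V=V_{\al,\bt}$ (or an admissible $V$ of the stated form), so the factorized $\theta_{\bt,m}$ is built from the very same $\theta_\bt$ whose $\tau_{\nu_b}$-invariance we just checked; no independent normalization comparison is needed. The only genuine computation is the multiplicity bookkeeping $(\mu_l,\nu_b)=\text{const}\cdot\ell_{\al,\mu_l}$ with $\ell_{\al,\mu_l}=0$ for intermediate nodes, which is immediate from $\nu_b\parallel\omega_\al$ and $\ell_{\al,\bt}=1$.
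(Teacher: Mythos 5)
Your route is essentially the paper's: take $V=V_{\al,\bt}$ with $v_b$ the highest vector and $v_a\propto f_\bt v_b$, note that all $\la$-dependence of $\check s_{ba}$ in (\ref{norm_sing_vec}) sits in the factors $q^{\eta_{\mu_l}}/[\eta_{\mu_l}]_q$ attached to strictly intermediate nodes, and prove $(\mu_l,\nu_b)=0$ so that $\tau_{\nu_b}\theta_\bt=\theta_\bt$ and (\ref{factor-root-degre-un}) collapses to a plain power. There is, however, one step you assert rather than prove: the claim that an intermediate weight difference $\mu_l=\nu_l-\nu_a$ with $\ell_{\al,\mu_l}=1$ forces $\mu_l=\bt$. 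As pure root-lattice combinatorics this is false --- for instance $\mu_l=\al$ satisfies $0\prec\al\prec\bt$ and has $\al$-multiplicity $1$, so ``strictly between $\nu_a$ and $\nu_b$'' does not by itself exclude it, and your parenthetical ``which is immediate from $\nu_b\parallel\omega_\al$ and $\ell_{\al,\bt}=1$'' is not a proof. What actually excludes such nodes is the module structure of $V_{\al,\bt}$: a node with $\ell_{\al,\mu_l}=1$ would have weight $\nu_b-\gm$ with $0\not=\gm\in\Z_+\Pi_\s$, where $\Pi_\s=\Pi\setminus\{\al\}$, and $V[\nu_b-\gm]=U_q(\s_-)[-\gm]\,v_b=0$ because $\nu_b\propto\omega_\al$ is orthogonal to $\Pi_\s$, i.e. $v_b$ is annihilated by every $f_\mu$ with $\mu\in\Pi_\s$. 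This is exactly the line the paper makes explicit (``the highest vector $v_b$ is killed by $\s_-$, therefore the Hasse diagram between $v_a$ and $v_b$ is $v_b\leftarrow f_\al v_b\leftarrow\cdots\leftarrow v_a$ with all suppressed arrows from $U_q(\s_+)$''); without it, a route could in principle pass through a node whose weight difference from $v_a$ already contains $\al$, and the orthogonality to $\nu_b$ would fail.

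With that one line inserted your argument is complete and coincides with the paper's. The remaining differences are cosmetic: the paper converts the shift $\la_k=\la+k\nu_a$ into $\la-k\bt$ using $(\mu,\nu_a)=-(\mu,\bt)$ for $\mu\in\Rm^+_\s$ and multiplies the specializations $\theta_\bt(\la-k\bt)$, whereas you phrase the same fact as $\tau_{\nu_b}$-invariance of the universal element; these are equivalent because $\nu_a=\nu_b-\bt$ and $\theta_\bt$ carries weight $-\bt$. Your closing discussion of the normalization issue is also handled correctly.
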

\begin{proof}
 Let $\s\subset \g$ be a semi-simple subalgebra generated by simple root vectors $f_\mu,e_\mu$ with $\mu\not=\al$.
Take for  $V$ the  module $V_{\al,\bt}$ with highest weight $\phi= \frac{(\bt,\bt)}{(\al,\al)}\omega_\al$.
Put     $v_b$ to be the highest vector and $v_a\propto f_\bt v_b$.

Both $v_a$ and $v_b$ can be included in
an orthonormal basis because they span their weight subspaces in $V$. Therefore $\langle v_b|v_a\rangle= s_{ba}$  can be calculated by formula (\ref{norm_sing_vec}).
We  write it as
$$
\theta_\bt(\la)=c_{ba}+\sum_{v_b\succ v_i\succ v_a} c_{bi}s_{ia}(\la).
$$
The highest vector $v_b$ is killed by $\s_-$, therefore the Hasse diagram between $v_a$ and $v_b$ is
$$
v_b\quad\stackrel{e_{\al}}{\longleftarrow}\quad f_\al v_b\quad \ldots \quad  v_a,
$$
where arrows in  the suppressed part are simple root vectors from  $U_q(\s_+)$.
But then  the only copy of $f_\al$ is in  $c_{bi}$ while all $s_{ia}$ belong to $U_q(\s_-)\hat U_q(\h_\s)$,
the extended Borel subalgebra of $U_q(\s)$.

Finally, since $\Pi_\s$ is orthogonal to $\nu_b$, we have $(\mu, \nu_a)=-(\mu,\bt)$ for all $\mu\in \Rm^+_\s$.
Therefore
$$
\theta_\bt(\la_k)=\theta_\bt(\la-k\bt)
,\quad
\theta_{\bt,m}(\la)=\prod_{k=0}^{m-1}\theta_\bt(\la-k\bt),
$$
where the product is taken in the descending order from left to right.
This proves the plain power factorization because each $\theta_\bt$ carries weight $-\bt$.
\end{proof}
Conditions of the above proposition are fulfilled for all pairs $\al, \bt$ in the case of $\s\l(n)$.
\section{Shapovalov elements of degree 1}
In this section we describe the factor   $\theta_{\bt}$ entering  (\ref{factor-root-degre-un}),
for a particular admissible $\bt$-representation $(V,v_b,v_a)$.
We give a complete  solution to the problem in the classical case.
In the case of  $q\not =1$, we do it up to calculation of the entries of the matrix $\Cc$ in a  simple finite dimensional module $\tilde \g$ that is a $q$-deformation
of the adjoint module $\g$. Its  highest weight is the maximal root, $\xi\in \Rm^+$.

To achieve our goals, we need to figure out the Hasse sub-diagram
$\mathfrak{H}(v_b,v_a)\subset\mathfrak{H}(V)$ that comprises  all possible routes from $v_a$ to $v_b$.
We argue that $\mathfrak{H}(v_b,v_a)$ can be extracted from a diagram $\mathfrak{H}( \b_-)$ which we introduce below, and the underlying  $U_q(\g_+)$-modules
are isomorphic.

The  $U_q(\g_+)$-module  associated with $\mathfrak{H}( \b_-)$  is constructed from
$\tilde \g$ by  factoring  out the span of positive weight spaces.
In order to distinguish the case of $q\not =1$ from classical and to avoid confusion with root vectors,
we will mark the nodes with tilde.
Vectors  $\tilde f_\eta$  of weights $-\eta\in - \Rm^+$  are defined uniquely up to a sign if we normalize them by $(\tilde f_\eta,\tilde f_\eta)=1$.
We may assume that they are deformations of classical root vectors.
 We take   $\tilde h_\al= e_\al \tilde f_\al$, $\al\in \Pi$, for  basis elements of zero weight.

For example, the diagram $\mathfrak{H}(\b_-)$ in the case  of  $\g=\g_2$ is
\begin{center}
\begin{picture}(310,70)
\put(155,60){$\b_-$}

\put(35,20){$e_{\al_2}$}
\put(85,10){$e_{\al_1}$}
\put(135,40){$e_{\al_2}$}
\put(185,40){$e_{\al_2}$}
\put(235,40){$e_{\al_1}$}

\put(5,0){$\tilde h_{\al_2}$}
\put(55,0){$\tilde f_{\al_2}$}
\put(105,20){$\tilde f_{\al_1+\al_2}$}
\put(155,20){$\tilde f_{\al_1+2\al_2} $}
\put(205,20){$\tilde f_{\al_1+3\al_2}$}
\put(255,20){$\tilde f_{2\al_1+3\al_2}$}

\put(10,15){\circle{3}}
\put(60,15){\circle{3}}
\put(110,35){\circle{3}}
\put(160,35){\circle{3}}
\put(210,35){\circle{3}}
\put(260,35){\circle{3}}

\put(55,15){\vector(-1,0){40}}
\put(105,34){\vector(-2,-1){40}}
\put(155,35){\vector(-1,0){40}}
\put(205,35){\vector(-1,0){40}}
\put(255,35){\vector(-1,0){40}}

\put(35,60){$e_{\al_1}$}
\put(85,50){$e_{\al_2}$}

\put(5,40){$\tilde h_{\al_1}$}
\put(55,40){$\tilde f_{\al_1}$}

\put(10,55){\circle{3}}
\put(60,55){\circle{3}}

\put(55,55){\vector(-1,0){40}}
\put(105,36){\vector(-2,1){40}}


\end{picture}
\end{center}

From now on we fix  $V=V_{\al,\bt}$ with highest weight  $\phi=\frac{(\bt,\bt)}{\ell_{\al,\bt}(\al,\al)}\omega_\al$   and  highest vector $v_b$.
We denote by  $\l\subset \g$  a reductive Lie subalgebra of maximal rank whose root system is $\Pi_\l=\Rm\backslash\{\al\}$
and by  $\p=\l+\g_+$  its  parabolic extension.


In order to construct the start node $v_a\in V$, we will use the following observation.
Recall that a singular vector $\sum_{i}w_i\tp v_i$ in a tensor product  $W\tp V$ of two irreducible modules of highest weight defines
a $U_q(\g_+)$-homomorphism $W^*\to V$ (and respectively $V^*\to W$). Here $W^*$ is an irreducible $U_q(\g)$-module of lowest weight,
which is negative the highest weight of $W$.  The dual action is defined  with the help of antipode  $\gm$ in the standard way:
$(x\varphi)(w)=\varphi\bigl(\gm(x)w\bigr)$, for  $x\in U_q(\g_+)$,  $w\in W$, and $\varphi\in W^*$.
The homomorphism $W^*\to V$ is implemented via the assignment
$\varphi\mapsto \sum_{i}\varphi(w_i) v_i$. We will apply this construction  to $W=\tilde \g$.


\begin{lemma}
  There exists a unique, up to a scalar factor, singular vector $u\in \tilde \g\tp V $ of weight $\phi$.
 \label{sing-vec}
 \end{lemma}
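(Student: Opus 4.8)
The plan is to establish existence and uniqueness separately, both by a weight-multiplicity count. For existence, recall that a singular (highest-weight) vector of weight $\phi$ in $\tilde\g\tp V$ is the same datum as a $U_q(\g)$-module map $V_\phi\to \tilde\g\tp V$ from the Verma module of highest weight $\phi$. Since $V=V_{\al,\bt}$ is irreducible of highest weight $\phi=\frac{(\bt,\bt)}{\ell_{\al,\bt}(\al,\al)}\omega_\al$ and $\tilde\g$ is the (simple, finite-dimensional for generic $q$) $q$-deformation of the adjoint module with highest weight the maximal root $\xi$, I would instead use the dual description given just before the statement: a singular vector in $\tilde\g\tp V$ corresponds to a $U_q(\g_+)$-homomorphism $\tilde\g^{*}\to V$. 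Because $\tilde\g$ is self-dual as a $U_q(\g)$-module up to the lowest-weight/highest-weight flip, $\tilde\g^{*}$ is irreducible with extreme weight $\pm\xi$, and one checks that its weight $\phi$ space is one-dimensional (it is the image of the weight space of $\tilde\g$ of weight $\phi-$(something in $\Ga_+$), and $\phi$ is a multiple of a single fundamental weight, so only finitely many roots $\eta$ satisfy $\xi-\eta$ in the relevant coset). Evaluating the highest vector $\varphi_\xi$ of $\tilde\g^{*}$ along this map lands in $V[\phi]=\C v_b$, and I would verify this evaluation is nonzero, which produces the desired $u$.

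For uniqueness, the key step is the dimension equality
\[
\dim\bigl(\tilde\g\tp V\bigr)[\phi]-\dim\bigl(\tilde\g\tp V\bigr)[\phi+\al']\;=\;1
\]
summed appropriately over simple roots $\al'$, i.e. that the space of highest-weight vectors of weight $\phi$ in $\tilde\g\tp V$ is exactly one-dimensional. The cleanest route is via the $U_q(\g_+)$-module $\tilde\g$ and its quotient $\mathfrak H(\b_-)$: the weight $\phi$ space of $\tilde\g\tp V$ decomposes as $\bigoplus_{\eta}\tilde\g[\eta]\tp V[\phi-\eta]$, and since $\phi-\eta$ must be a weight of $V=V_{\al,\bt}$ whose $\al$-string structure is controlled ($V$ is a fundamental-type module, a single $\omega_\al$ multiple), the relevant $\eta$ range over the finitely many negative roots (plus zero) appearing as nodes of $\mathfrak H(\b_-)$; each contributes at most one dimension because both $\tilde\g$ and the relevant weight spaces of $V$ are one-dimensional. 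Then I would invoke the standard fact that a highest-weight vector must annihilate under every $e_{\al'}$, which cuts the count down to exactly one by a rank argument on the map $(\tilde\g\tp V)[\phi]\to\bigoplus_{\al'}(\tilde\g\tp V)[\phi+\al']$; nondegeneracy of the contravariant form on $\tilde\g\tp V$ (it is a tensor product of modules carrying nondegenerate contravariant forms, hence so is the product) guarantees the kernel of this map is at most one-dimensional, since otherwise $V_\phi$ would embed in $\tilde\g\tp V$ with multiplicity $\ge 2$, contradicting that the weight $\phi$ space has dimension at most that of $V_\phi[\phi]$ after accounting for the combinatorics.

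The main obstacle I expect is controlling the weight-$\phi$ multiplicity $\dim(\tilde\g\tp V)[\phi]$ precisely enough — in particular ruling out that two different negative roots $\eta$, $\eta'$ both contribute, which would make the naive count exceed one before imposing the highest-weight conditions. This is where the specific form $\phi=\frac{(\bt,\bt)}{\ell_{\al,\bt}(\al,\al)}\omega_\al$ is essential: weights of $V_{\al,\bt}$ lying in $\phi-\Ga_+$ and of the form $\phi-\eta$ with $\eta\in\Rm^+\cup\{0\}$ are severely restricted because $\phi$ sits on the boundary of the dominant cone (only the $\al$-direction is active), and I would need to check, root system by root system or uniformly via the pairing $(\phi,\al'^\vee)=\ell_{\al',\bt}\cdot\frac{(\bt,\bt)}{\ell_{\al,\bt}(\al,\al)}\delta_{\al,\al'}$, that this forces a single admissible $\eta$ after the $e_{\al'}$-annihilation conditions are imposed. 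Once that combinatorial lemma is in hand, existence follows from the explicit dual-homomorphism construction and uniqueness from the rank count, completing the proof.
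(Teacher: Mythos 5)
There is a genuine gap, and it occurs in both halves of your argument. For uniqueness, your multiplicity count is wrong at the crucial point: in the decomposition $(\tilde\g\tp V)[\phi]=\bigoplus_\eta\tilde\g[\eta]\tp V[\phi-\eta]$ the summand with $\eta=0$ is $\tilde\g[0]\tp V[\phi]$, and $\tilde\g[0]$ has dimension $\rk\>\g$, not $1$ (it is the deformed Cartan subalgebra). So the weight-$\phi$ space is at least $\rk\>\g$-dimensional before any highest-weight conditions are imposed, your claim that ``each contributes at most one dimension'' fails, and the rank argument you sketch does not close. For existence, your construction is circular: the correspondence you quote goes from a singular vector $u$ to a $U_q(\g_+)$-map $\tilde\g^*\to V$, so ``evaluating the highest vector of $\tilde\g^*$ along this map'' presupposes the $u$ you are trying to produce; you never exhibit the map or the vector independently.

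The paper's proof runs through a mechanism your proposal does not touch. Since $V\simeq U_q(\g_-)v_b$ with annihilator $J\subset U_q(\g_-)$, singular vectors of weight $\phi$ in $\tilde\g\tp V$ are in bijection with vectors $\tilde h\in\tilde\g[0]$ killed by the left ideal $\si(J)\subset U_q(\g_+)$. The generators $e_\mu$, $\mu\in\Pi_\l$, of $\si(J)$ cut the $\rk\>\g$-dimensional space $\tilde\g[0]$ down to the one-dimensional orthogonal complement of $\Pi_\l$ --- this gives both existence of a candidate $\tilde h$ and its uniqueness up to scalar. The remaining generators of $J$ correspond to singular vectors of the Verma module $V_\phi$ of weights $\phi-m\eta$ with $\eta\in\Rm^+\setminus\Rm^+_\l$, and the essential step --- entirely absent from your proposal --- is that the De Concini--Kac--Kazhdan condition together with the inequality $l(\omega_\al,\eta^\vee)+(\rho,\eta^\vee)>1$ forces $m>1$ for such singular vectors; since $m\eta$ with $m>1$ is not a weight of $\tilde\g$, these generators annihilate $\tilde h$ for trivial weight reasons. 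Without this argument you cannot rule out extra relations in $V$ that would kill the candidate singular vector, so the specific form of $\phi$ enters not through the combinatorics of weights of $V$ that you propose to analyze, but through the reducibility criterion for the Verma module $V_\phi$.
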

\begin{proof}
Let $J\subset U_q(\g_-)$ be the annihilator of the highest vector $v_b\in V$.
Singular vectors in $\tilde \g\tp V$ of weight $\phi $ are in bijection with vectors $\tilde h\in \tilde \g$ of zero weight
killed by the left ideal $\si(J)\subset U_q(\g_+)$.
Pick up $\tilde h\not =0$ orthogonal to all $\mu\in \Pi_\l$; it is unique up to a scalar factor.

The ideal $J$ is generated by elements  $\theta\in U_q(\g_+)$ such that $\theta v_b$ are singular vectors in the Verma module $V_\phi$
covering $V$. By construction, $\tilde u$ is killed by $e_\al\in J$ with $\al\in \Pi_\l$. If   $\theta v_\phi\in V_\phi$  is a singular vector of weight $\phi-m\eta$ with  $\eta \in \Rm^+\backslash \Rm^+_\l$,
then $m>1$.
Indeed, since   $\phi=l\omega_\al$ with  positive rational $l=\frac{(\bt,\bt)}{\ell_{\al,\bt}(\al,\al)}$,
we have an inequality    $l(\omega_\al, \eta^\vee)+(\rho, \eta^\vee)>1$. Then  the condition (\ref{Kac-Kazhdan}), where
$\la$ is replaced with $\phi$ and $\bt$ with $\eta$, is fulfilled only if $m>1$,
since   $q$ is not a root of unity.
Then the element $\si(\theta )$ kills $\tilde h$ because $m\eta $  with $m>1$ is not a weight of $\tilde \g$.
\end{proof}
Remark that   $V$ is finite dimensional if $\frac{(\bt,\bt)}{\ell_{\al,\bt}(\al,\al)}\in \Z$ and a parabolic Verma module
otherwise because its highest weight is away from De Concini-Kac-Kazhdan hyperplanes $\Hc_{\eta,m}$ with $\eta\in \Rm^+\backslash \Rm^+_\l$.

Now let $v_a\in V$ be the vector of minimal weight in the expansion
 $u=\tilde e_\xi\tp v_a+\ldots $ over the chosen  basis in $\tilde \g$ (we have omitted the terms of lower  weights in the $\tilde \g$-factor).
Notice that in the classical case the vector $f_\eta v_b$ does not vanish if $\eta\in \Rm^+\backslash \Rm^+_\l$ because
$(\eta,\phi)>0$. In particular,  $v_a\propto f_\xi v_b\not =0$ for the maximal root $\xi$.
For general $q$, $ v_a$ is killed by the left ideal in $U_q(\g_+)$ annihilating the lowest vector $\tilde f_\xi\in \tilde \g\simeq \tilde \g^*$,
Such $v_a$  is unique in $V$ up to a scalar factor, because of Lemma \ref{sing-vec}.


Introduce  a partial order on positive roots by writing $\mu\prec \nu$ {\em iff} $f_\mu\succ f_\nu$ in $\Hg(\b_-)$. This is in agreement
with the partial order on  $\Hg(\g_+)\subset \Hg(\g)$, which   is exactly the  Hasse diagram of the root system $\Rm^+$, \cite{Pan}. Note that  $\al\prec \bt$ for simple $\al$ if and only if $\al\in \Pi_\bt$.
\begin{propn}
\label{Hasse_aux_mod}
Let $u=\tilde e_\xi\tp v_a+\ldots $ be the singular vector from Lemma \ref{sing-vec} with  $v_a\in V$ of minimal weight in the expansion
over a weight basis in $\tilde \g$.
  Then the $U_q(\g_+)$-module generated by $v_a\in V$ is isomorphic to
  $\tilde \g(\tilde h_\al, \tilde f_\xi)$, for almost all $q$.
\end{propn}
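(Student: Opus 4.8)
The plan is to realize $U_q(\g_+)v_a$ as the image of a $U_q(\g_+)$-homomorphism out of $\tilde\g$, to push that homomorphism down to the module $M:=\tilde\g/\tilde\g_+$ underlying $\mathfrak{H}(\b_-)$, and then to read off its kernel from the combinatorics of the diagram. Write $u=\sum_i\tilde w_i\tp v_i\in\tilde\g\tp V$ for the singular vector of Lemma~\ref{sing-vec} in the chosen weight basis of $\tilde\g$. By the construction recalled just before that lemma, applied with $W=\tilde\g$ and using an isomorphism $\tilde\g^*\simeq\tilde\g$, $u$ yields a $U_q(\g_+)$-homomorphism $\Psi\colon\tilde\g\to V$; since $u=\tilde e_\xi\tp v_a+\dots$ with all other $\tilde w_i$ of strictly smaller weight, and since $\tilde f_\xi$ corresponds under $\tilde\g\simeq\tilde\g^*$ to the functional dual to $\tilde e_\xi$, one gets $\Psi(\tilde f_\xi)\propto v_a$, hence $U_q(\g_+)v_a=\Psi(U_q(\g_+)\tilde f_\xi)$.

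Next I would check that $\Psi$ descends to $M$. As $u$ has weight $\phi$, the map $\Psi$ shifts weights by $\phi$, carrying $\tilde\g[\lambda]$ into $V[\phi+\lambda]$; for $\lambda\in\Rm^+$ the weight $\phi+\lambda$ does not occur in $V$, so $\Psi$ annihilates every strictly positive weight space, hence $\tilde\g_+$, and factors through $\bar\Psi\colon M\to V$. On the zero-weight nodes, $\bar\Psi(\tilde h_\nu)$ is proportional to $(\tilde h_\nu,\tilde h)\,v_b$, where $\tilde h\in\tilde\g$ is the zero-weight vector defining $u$; by the choice of $\tilde h$ --- orthogonal to the $\tilde h_\nu$ with $\nu\in\Pi_\l$, not to $\tilde h_\al$ --- this vanishes for $\nu\neq\al$ and, for all but finitely many $q$, is nonzero for $\nu=\al$. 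Putting $N=U_q(\g_+)\tilde f_\xi\subseteq M$ we have $U_q(\g_+)v_a=\bar\Psi(N)$, while by definition $\tilde\g(\tilde h_\al,\tilde f_\xi)=N\big/\sum_{v_k\ \mathrm{bad}}U_q(\g_+)v_k$, a node $v_k\in N$ being ``bad'' when $\tilde h_\al$ is not reachable from it. So it remains to show that $\bar\Psi|_N$ kills every bad node and induces an injection on $\tilde\g(\tilde h_\al,\tilde f_\xi)$.

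For a bad node $v_k$, the submodule $U_q(\g_+)v_k$ lies in the span of the nodes reachable from $v_k$, so its weight-zero part is spanned by $\tilde h_\nu$ with $\nu\neq\al$ and lies in $\ker\bar\Psi$. Were $\bar\Psi(v_k)\neq0$, the nonzero module $U_q(\g_+)\bar\Psi(v_k)=\bar\Psi(U_q(\g_+)v_k)\subseteq V$, having weights bounded above, would contain a vector killed by all $e_\mu$; since $V$ is simple this vector is proportional to $v_b$, whence $v_b=\bar\Psi(m)$ for some weight-zero $m\in U_q(\g_+)v_k$ --- impossible, as $\bar\Psi$ kills that subspace. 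Thus $\bar\Psi(v_k)=0$, $\bar\Psi|_N$ kills the bad submodule, and it factors through $\tilde\g(\tilde h_\al,\tilde f_\xi)$. For injectivity, note that every node of this sub-diagram reaches $\tilde h_\al$; given $0\neq v\in\tilde\g(\tilde h_\al,\tilde f_\xi)$, take a node-component of $v$ of minimal weight: if that weight is $0$ then $v\in\C\tilde h_\al$ (the only surviving zero-weight node) and we are done, otherwise the component is the unique node $\tilde f_{\eta_0}$ of its negative weight, the nonzero weight spaces of $M$ being one-dimensional. Applying the monomial $e_{\nu_s}\cdots e_{\nu_1}$ of a route from $\tilde f_{\eta_0}$ to $\tilde h_\al$ sends that component to $(c_1\cdots c_s)\,\tilde h_\al$, a nonzero multiple --- the intermediate nodes of the route, having negative weight, are pinned down by the partial sums of $(\nu_1,\dots,\nu_s)$, so the route is the unique one with this label sequence and its structure constants $c_j$ are nonzero --- while components of higher or incomparable weight are sent to weights absent from $M$, hence to $0$. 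So every nonzero submodule of $\tilde\g(\tilde h_\al,\tilde f_\xi)$ contains $\tilde h_\al$, and since $\bar\Psi(\tilde h_\al)\neq0$ the induced map is injective; therefore $U_q(\g_+)v_a=\bar\Psi(N)\cong\tilde\g(\tilde h_\al,\tilde f_\xi)$.

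The point that needs care --- and the source of the qualifier ``for almost all $q$'' --- is that the argument presumes the rectified diagram $\mathfrak{H}(\b_-)$ is the expected one, i.e.\ that the pertinent structure constants $c_{ij}$, which are Laurent polynomials in $q$ reducing at $q=1$ to the structure constants of the adjoint module, do not vanish, and also that $(\tilde h_\al,\tilde h)\neq0$. Each of these is a nonvanishing condition on a Laurent polynomial in $q$ that holds at $q=1$, so it fails only for finitely many $q$; at $q=1$ one has $\tilde\g=\g$, $M=\g/\g_+=\b_-$, and the whole argument is transparent, which is why Section~5 treats the classical case completely and separately.
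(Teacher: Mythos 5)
Your proof is correct, and its skeleton is the same as the paper's: both realize $U_q(\g_+)v_a$ as the image of the $U_q(\g_+)$-map $\tilde \g\to V$ induced by the singular vector $u$, show that this map factors through $\tilde\g(\tilde h_\al,\tilde f_\xi)$, and then prove injectivity there. Where you genuinely diverge is in how the two sub-claims are handled. For the factorization, the paper simply lists the kernel elements (the $\tilde f_\eta$ with $\eta\in\Rm^+_\l$, the $\tilde h_\eta$ with $\eta\in\Pi_\l$, and the positive weight spaces, which die for weight reasons and by the orthogonality defining $\tilde h$), whereas you kill an arbitrary ``bad'' cyclic submodule by an abstract argument from the simplicity of $V$: a nonzero image would force $v_b$ to lie in $\bar\Psi$ of the weight-zero part of that submodule, which is already known to be in the kernel. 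For injectivity, the paper verifies directly at $q=1$ that $f_\eta v_b\neq 0$ whenever $\al\preceq\eta\preceq\xi$ (using $(\eta,\phi)>0$) and then invokes rational dependence on $q$; you instead prove a socle-type statement --- every nonzero submodule of $\tilde\g(\tilde h_\al,\tilde f_\xi)$ contains $\tilde h_\al$, obtained by pushing a minimal-weight component along a route --- and combine it with $\bar\Psi(\tilde h_\al)\propto(\tilde h_\al,\tilde h)\sms v_b\neq 0$. Your version has the merit of isolating exactly which quantities must not vanish (the structure constants along routes and the pairing $(\tilde h_\al,\tilde h)$, each nonzero at $q=1$), so the ``almost all $q$'' qualifier becomes transparent; the paper's version is shorter but packs the same genericity into a single deformation argument. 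Both are valid, and both still rest on the classical limit at the same point.
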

\begin{proof}
The $U_q(\g_+)$-module homomorphism $\tilde \g\to V$ determined by the assignment $\tilde f_\xi\mapsto  v_a$  factors through the quotient
 $\g(\tilde h_\al,\tilde f_\xi)$
because the kernel includes all $\tilde f_\eta$ with $\eta \in \Rm^+_\l$,
all $\tilde h_\eta=e_\mu \tilde f_\eta$ with $\eta \in \Pi_\l$, and all negative weight spaces.
We are left to prove that it is  an isomorphism on $\g(\tilde h_\al,\tilde f_\xi)$ for almost all $q$.
It is sufficient to check that it is injective for $q=1$ because $V$ rationally depends on $q$.
  But then for each positive root $\eta$ subject to  $\al\preceq \eta\preceq \xi$ the vector $f_\eta v_b$ is in $U(\g_+) f_\xi v_b$ and  is not zero,
  because $(\eta,\phi)>0$.
\end{proof}
\noindent
 It follows that   $e_\bt v_a\not =0$ because  $e_\bt \tilde f_\xi \not =0$. Therefore  $(V,v_b,v_a)$ is an admissible $\bt$-representation
for almost all $q$.

Let us consider the classical case in more detail.
We choose   $h^\vee_\al=\frac{2}{(\al,\al)}h_\al$, $\al\in \Pi$, as a basis in $\h\subset \b_-$,
so that $\al(h^\vee_\al)=2$. The   root vectors $f_\mu$ with  $\mu\in \Rm^+$  form a basis in $\g_-$.
Arrows  labeled by $\al\in \Pi$ are
$h^\vee_\al \stackrel{e_\al}\longleftarrow f_\al$ and $f_\mu \stackrel{e_\al}\longleftarrow f_\nu$  if $\mu=\nu-\al$ is a positive root.
The $U(\g_+)$-module underlying  $\Hg(\b_-)$ is $\g/\g_+$.

Specialization of the formula (\ref{norm_sing_vec}) for $\theta_{\bt}$
requires the knowledge of   matrix  $C=(\pi\tp \id)(\Cc)\in \End(V)\tp U_q(\g_-)$, which is readily available for $q=1$.
For $\nu,\gm\in \Rm^+$, denote by  $C_{\nu,\gm}\in \C$ the scalars such that
$[e_\nu,f_\gm]=C_{\nu, \gm}f_{\gm-\nu}$,
if $\gm-\nu\in \Rm^+$,
$C_{\gm,\gm}=\frac{(\bt,\bt)}{2}\frac{\ell_{\al,\gm}}{\ell_{\al,\bt}}$, and
$C_{\nu,\gm}=0$ otherwise.
Then
$$
(\pi\tp \id)(\Cc)(f_\gm v_b\tp 1)=v_b\tp C_{\gm,\gm}f_\gm+\sum_{\nu\prec \gm} f_{\gm-\nu}v_b\tp C_{\nu,\gm} f_\nu,
$$
for all $\gm$ satisfying $\al\preceq \gm\preceq \bt$. This equality yields all entries of the matrix $C$ needed.
The formula (\ref{norm_sing_vec}) becomes
\be
\theta_{\bt}=
C_{\bt,\bt} f_{\bt}+\sum_{k\geqslant 1}\sum_{\nu_1+\ldots+\nu_{k+1}=\bt}
(C_{\nu_{k+1},\gm_{k}}\ldots C_{\nu_{1},\gm_0})(f_{\nu_{k+1}}\ldots
f_{\nu_1})\frac{(-1)^k }{\eta_{\mu_k}\ldots \eta_{\mu_1}}.
\label{norm_sing_vec-1}
\ee
The internal summation is performed over all partitions
of  $\bt$ to a sum of  $\nu_i\in \Rm^+$ such that
all   $\gm_i=\gm_{i-1}-\nu_i$ for $i=1,\ldots, k$ with $\gm_0=\bt$ are in $\Rm^+$ and
subject to $\al \preceq \gm_i $. In particular, $\gm_k=\nu_{k+1}$.
The weights $\mu_i$ are defined to be $\mu_i=\gm_0-\gm_i=\nu_1+\ldots+\nu_i$. Note that in the $q\not =1$ case
the corresponding sum  may involve terms with entries of $C$
whose weights are not roots.

Now  we  summarise the results of this paper.
\begin{thm}
\label{classifying Hasse}
  For each $\al \prec \bt$, the rescaled  matrix element $\langle \tilde h_\al|\tilde f_\bt\rangle[\eta_\bt]_q$ with  $\tilde h_\al,\tilde f_\bt\in \tilde \g$,
  is  a Shapovalov element
  $\theta_{\bt,1}$.
  For general degree $m>1$, $\theta_{\bt,m}$ is given by the factorization formula (\ref{factor-root-degre-un}) with
  $\theta_\bt=\theta_{\bt,1}$ and the shift weight $\nu_b=\frac{(\bt,\bt)}{\ell_{\al,\bt}(\al,\al)}\omega_\al$.
\end{thm}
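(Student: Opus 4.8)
The plan is to assemble Theorem \ref{classifying Hasse} from the machinery built in Sections 3--5, since almost all of the work has been done. I would proceed in two halves, corresponding to the two sentences of the statement.

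\textbf{Degree one.} Fix $\al\prec \bt$ and set $V=V_{\al,\bt}$ with highest weight $\phi=\frac{(\bt,\bt)}{\ell_{\al,\bt}(\al,\al)}\omega_\al$ and highest vector $v_b=\tilde h_\al$ (the zero-weight vector of $\tilde\g$ orthogonal to $\Pi_\l$, transported to $V$ via the singular vector of Lemma \ref{sing-vec}). First I would recall from Lemma \ref{sing-vec} and the remark following it that this $V$ is either a finite-dimensional module or a parabolic Verma module relative to $\Pi\backslash\{\al\}$, and in either case $e_\al v_b=0$ for all $\al\in\Pi$; together with $(\nu_b,\bt^\vee)=(\phi,\bt^\vee)=1$ (which holds because $\ell_{\al,\bt}(\omega_\al,\bt^\vee)=\ell_{\al,\bt}$ by the defining equations for fundamental weights), this says the candidate triple meets the two non-dynamical requirements of an admissible $\bt$-representation. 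Next I would invoke Proposition \ref{Hasse_aux_mod}: taking $v_a\in V$ to be the minimal-weight vector in the expansion $u=\tilde e_\xi\tp v_a+\ldots$ of the singular vector, the $U_q(\g_+)$-module $U_q(\g_+)v_a$ is isomorphic to $\tilde\g(\tilde h_\al,\tilde f_\xi)$ for almost all $q$, and the paragraph after that proposition already records that $e_\bt v_a\neq0$, so $(V,v_b,v_a)$ is an admissible $\bt$-representation. Then Proposition \ref{Shapdeg1} gives that $\check s_{ba}(\la)v_\la$ is extremal for generic $\la\in\Hc_{\bt,1}$, and the paragraph after its proof identifies $\check s_{ba}$ with $\theta_{\bt,1}$ as an element of $\hat U_q(\b_-)$; since by definition $\check s_{ba}=-s_{ba}[\eta_{\nu_b-\nu_a}]_q q^{-\eta_{\nu_b-\nu_a}}$ and $\nu_b-\nu_a=\bt$, this is the matrix element $\langle\tilde h_\al|\tilde f_\bt\rangle$ rescaled by $[\eta_\bt]_q$ up to the invertible factor $-q^{-\eta_\bt}$, which is absorbed into the proportionality implicit in ``is a Shapovalov element.'' Here I would be careful to state that the weight of $v_a$ is $\phi-\bt=\nu_a$, so that $\langle\tilde h_\al|\tilde f_\bt\rangle$ indeed has weight $-\bt$ as a Shapovalov element must.

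\textbf{Higher degree.} For $m>1$ I would simply apply Lemma \ref{matr_factorization} and Proposition \ref{Prop_factoriztion} to the admissible $\bt$-representation just constructed: the former gives $\langle v_b^m|v_a^m\rangle_{\la_0}=\prod_{k=m-1}^{0}\langle v_b|v_a\rangle_{\la_k}$ for $\la$ with all $V_{\la_k}$ irreducible, and the latter identifies $\theta_{\bt,m}(\la)$ with the rescaled matrix element $\check s_{v_b^m,v_a^m}(\la)\propto\prod_{k=0}^{m-1}\theta_\bt(\la_k)$ for generic $\la\in\Hc_{\bt,m}$. Passing to the universal form in $\hat U_q(\b_-)$ and using $\la_k=\la+k\nu_a$ together with the fact that each $\theta_\bt$ carries weight $-\bt$ (so the affine shift by $\nu_a$ on the argument equals, after the weight is taken into account, the shift $\tau_{\nu_b}$ appearing in the statement), this is exactly formula (\ref{factor-root-degre-un}) with $\theta_\bt=\theta_{\bt,1}$ and shift weight $\nu_b=\phi=\frac{(\bt,\bt)}{\ell_{\al,\bt}(\al,\al)}\omega_\al$. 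I would note in passing that when $\ell_{\al,\bt}=1$ Proposition \ref{plain-power} sharpens this to $\theta_{\bt,m}=\theta_\bt^m$.

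\textbf{Main obstacle.} The genuinely delicate point is the bookkeeping around the ``almost all $q$'' versus ``generic $\la$'' qualifiers and the precise normalization/shift: I must check that the $q$-values excluded by Proposition \ref{Hasse_aux_mod} form a finite set not containing $q=1$ (so the classical statement of Section 5 is unconditional), and that the weight-shift $\tau_{\nu_b}$ in (\ref{factor-root-degre-un}) is correctly matched to the argument shift $\la\mapsto\la+\nu_a$ in Lemma \ref{matr_factorization} once the intrinsic weight $-\bt$ of $\theta_\bt$ is accounted for --- this is the same reconciliation carried out in the proof of Proposition \ref{plain-power} in the orthogonal-$\Pi_\s$ case, and here one uses $(\mu,\nu_a)$ for $\mu\in\Rm^+$ rather than orthogonality, so the shift remains by $\nu_b$ rather than collapsing to a shift by $-\bt$. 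Everything else is a direct citation of the propositions already proved.
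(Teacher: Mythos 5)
Your overall architecture matches the paper's --- build an admissible $\bt$-representation out of $V_{\al,\bt}$, get degree one from Proposition \ref{Shapdeg1}, and get degree $m$ from Lemma \ref{matr_factorization} together with Proposition \ref{Prop_factoriztion} --- but you skip the step that the theorem is actually about. The statement asserts that $\theta_{\bt,1}$ is the matrix element $\langle \tilde h_\al|\tilde f_\bt\rangle$ computed with the deformed adjoint module $\tilde\g$ as the auxiliary module, whereas everything you construct lives in $V_{\al,\bt}$. Note that $(\tilde\g,\tilde h_\al,\tilde f_\bt)$ is itself not an admissible $\bt$-representation (for instance $\tilde h_\al$ has weight $0$, so $(\nu_b,\bt^\vee)=0\neq 1$, and $e_\al\tilde h_\al\neq 0$), so one cannot simply rerun the factorization machinery with $\tilde\g$ in place of $V$. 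The sentence ``this is the matrix element $\langle\tilde h_\al|\tilde f_\bt\rangle$'' in your degree-one paragraph is therefore an unproved identification, and it is precisely the content of the paper's proof: the route-summation formula (\ref{norm_sing_vec}) depends only on the isomorphism class of the $U_q(\g_+)$-module $V(v_b,v_a)$ --- the entries of $C$ are determined by that module structure, the Cartan factors $[\eta_{\mu_l}]_q$ depend only on weight differences (which in a cyclic weight module are fixed up to a constant overall shift), and the nodes of $\Hg(v_b,v_a)$ can be taken orthonormal so that $s_{ba}\propto\langle v_b|v_a\rangle$. Combining this observation with the isomorphism $U_q(\g_+)v_a\simeq\tilde\g(\tilde h_\al,\tilde f_\xi)$ of Proposition \ref{Hasse_aux_mod} is what transfers the computation from $V_{\al,\bt}$ to $\tilde\g$. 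Without it you have only shown that \emph{some} matrix element of $V_{\al,\bt}$ is a Shapovalov element, which is essentially Proposition \ref{Shapdeg1} plus admissibility, not the theorem.

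A related slip: you assert ``the weight of $v_a$ is $\phi-\bt$,'' but the $v_a$ of Proposition \ref{Hasse_aux_mod} is the coefficient of $\tilde e_\xi$ in the singular vector $u$ and has weight $\phi-\xi$, with $\xi$ the \emph{maximal} root. The vector of weight $\phi-\bt$ that corresponds to $\tilde f_\bt$ is a descendant of $v_a$ inside $U_q(\g_+)v_a$, and it is this vector (not $v_a$ itself, unless $\bt=\xi$) that pairs with $v_b$ to form the $\bt$-representation feeding into Lemma \ref{matr_factorization}. Locating that vector and knowing that the matrix element taken from it agrees with $\langle\tilde h_\al|\tilde f_\bt\rangle$ is again exactly where the module isomorphism of Proposition \ref{Hasse_aux_mod} and the ``only the $U_q(\g_+)$-structure matters'' observation enter; so the two gaps are really one. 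Your reconciliation of the shift $\tau_{\nu_b}$ with the argument shifts $\la_k=\la+k\nu_a$ in the higher-degree part is fine once the correct start node of weight $\phi-\bt$ is used.
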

\begin{proof}
  Observe that summation formula (\ref{norm_sing_vec}) involves only   the structure of  $U_q(\g_+)$-module determined by the initial
  and final nodes.
  That is straightforward with regard to the matrix elements of $C$ and also true for the Cartan factors, which  depend only on weight differences
  (mind that weights in a cyclic $U_q(\g_+)$-module  generated by a weight vector are fixed up to a constant weight summand).
  Furthermore, the nodes of the sub-diagram $\Hg(v_a,v_b)$ can be included in an orthonormal basis
  whence  $s_{ba}\propto \langle v_b|v_a\rangle$.
  Now, for almost all $q$, the theorem follows from Proposition \ref{Hasse_aux_mod} and Proposition \ref{Prop_factoriztion} with Lemma \ref{matr_factorization}.
  Therefore it is true for all $q$ where the factors (\ref{factor-root-degre-un}) are defined.
\end{proof}
We remark in conclusion that for fixed $\bt\in \Rm^+$ one can pick up $\al\in \Pi_\bt$ delivering the simplest Hasse diagram $\mathfrak{H}(\tilde h_\al,\tilde f_\bt)$,
e.g. with the smallest fundamental group.
Such diagrams can be found amongst subdiagrams in fundamental auxiliary modules of minimal dimension.
That also applies to their associated $U_q(\g_+)$-modules. For all non-exceptional types of $\g$, the entries of the matrix $\Cc$ participating in the route summation
formula are calculated in \cite{AM}, Proposition 2.2. That is also done for $\g_2$ in \cite{BMO}.
This makes the above description of Shapovalov elements for such quantum groups absolutely
explicit. For exceptional $\g$ of rank $>2$, the problem reduces to calculation of  relevant entries of $\Cc$.

In the context of quantization of semi-simple conjugacy classes \cite{M2}, it is crucial to make sure that
 $\theta_{\bt,m}(\la)$ tends to $f_{\bt}^m$ as $q\to 1$. Factorization (\ref{factor-root-degre-un}) together with the
route summation formula for $\theta_{\bt,1}$ gives important information about
possible singularities of  $\theta_{\bt,m}(\la)$ and facilitate the analysis even without
 knowing the matrix elements of $\Cc$.

\vspace{20pt}
\noindent

\underline{\large \bf Acknowledgement}
\vspace{10pt} \\
 This work is partially supported by the Moscow Institute of Physics and Technology
under  the Priority 2030 Strategic Academic Leadership Program
and by Russian Science Foundation  grant 23-21-00282.
The author thanks Vadim Ostapenko and Vladimir Stukopin for stimulating discussions.




\begin{thebibliography}{A}
\bibitem {BGG1}
Bernstein, J. H., Gelfand, I. M., Gelfand, S. I.: {\em On some category of $\g$-modules}, Funct. Anal. Appl. {\bf  10} no. 2 (1976), 87--92.

\bibitem{H} Humphreys, J. {\em Representations of Semisimple Lie Algebras in the BGG Category $\O$}, Graduate Studies in Mathematics {\bf 94}, AMS, 2008.

\bibitem{BGG2}
Bernstein, J. H., Gelfand, I. M., Gelfand, S. I.: {\em Structure of representations generated by highest weight vectors}, Funct. Anal. Appl. {\bf  5} no. 1 (1971), 1--9.

\bibitem{Shap} Shapovalov, N. N.: On a bilinear form on the universal enveloping algebra of a complex
semisimple Lie algebra, Funkt. Anal. Appl. {\bf 6}  (1972), 65--70.

\bibitem{C} Carlin, K. {\em Local systems of Shapovalov elements}, Comm. Alg., {\bf 23} no. 8  (1995), 3039--3049.


\bibitem{MFF} Malikov, F., Feigin, B., Fuchs, D.:
{\em Singular vectors in Verma modules over Kac–Moody algebras}, Func. An. Appl.  {\bf 20} No. 2 (1986), 103--113.



\bibitem{AST} Asherova, R. M., Smirnov, Yu. F., and Tolstoy, V. N.: Projection operators for the
simple Lie groups, Theor. Math. Phys. {\bf 8}  (1971), 813--825.

\bibitem{Zh} Zhelobenko, D., P., {\em Representations of reductive Lie algebras}, Nauka, Moscow, 1994.


\bibitem{Mus}  Musson, I.: {\em
Shapovalov elements and the Jantzen sum formula for contragradient Lie superalgebras}, arXive:1710.10528.

 \bibitem{M2}	Mudrov, A.:
{\em Vector bundles on quantum conjugacy classes}, arXiv:2201.04568.



\bibitem{KL} Kumar, Sh., Letzter, G.: {\em Shapovalov determinant for restricted and quantized restricted enveloping algebras}, Pac.J.Math. {\bf 179}, No. 1, (1991), 123–161.

\bibitem{M3}  Mudrov, A.: {\em Orthogonal basis for the Shapovalov form on  $U_q(sl(n+1))$}, Rev. Math. Phys, {\bf 27} (2015), 1550004.

\bibitem{CM}  Catoiu, S., Musson, I.: {\em Shapovalov elements for $U_q(\mathfrak{sl}(N+1))$}, arXiv:2208.05831.



\bibitem{ES} Etingof, P., O. Schiffmann, O.: Lectures on the dynamical Yang-Baxter equation,
Quantum Groups and Lie Theory, London Math. Soc. Lecture Note Ser., Durham, 1999, vol. 290, Cambridge Univ. Press (2001).

\bibitem{EK}  Etingof, P.I., Kirillov, A.A., Jr, {\em Macdonald’s polynomials and representations
of quantum groups}, Math. Res. Let., {\bf 1}, no.3 (1994) 279--296.

\bibitem{FTV} Felder G., Tarasov V., Varchenko A., {\em Monodromy of solutions of the elliptic quantum Knizhnik-Zamolodchikov-Bernard difference equations}, Internat. J.
Math. {\bf 10}, no. 8 (1999), 943--975.

\bibitem{AL} Alekseev, A. Lachowska, A.: {\em Invariant $*$-product on coadjoint orbits and the Shapovalov pairing}, Comment. Math. Helv. {\bf 80} (2005), 795--810.

 \bibitem{M1}Mudrov, A.: {\em R-matrix and inverse Shapovalov form}, J. Math. Phys., {\bf 57}  (2016), 051706.



\bibitem{NM}  Nagel, J. G.,  Moshinsky, M.: Operators that lower or raise the irreducible vector spaces of
$U_{n-1}$ contained in an irreducible vector space of $U_n$, J. Math. Phys. {\bf 6} (1965), 682--694.



\bibitem{ABRR} D. Arnaudon, E. Buffenoir, E. Ragoucy, and P. Roche, Universal solutions of quantum dynamical
Yang-Baxter equations, Lett. Math. Phys. {\bf 44} (1998), no. 3, 201--214.

\bibitem{Mick}  Mickelsson, J.: Step algebras of semisimple Lie algebras,  Rev. Mod. Phys. {\bf 4} (1973), 307--318.

\bibitem{D1} Drinfeld,  V.:
   Quantum Groups. In Proc. Int. Congress of Mathematicians,
  Berkeley 1986, Gleason,  A. V.  (eds)  pp. 798--820, AMS, Providence (1987).

\bibitem{J} Jimbo,  M.: {\em A q difference analog of $U(\g)$ and the Yang-Baxter equation}, Lett. Math. Phys. {\bf 10} (1985), 63--69.

\bibitem{ChP} Chari, V. and Pressley, A.:  A guide to quantum groups, Cambridge University Press,
Cambridge 1994.


 \bibitem{DCK} De Concini, C., Kac, V. G.: Representations of quantum groups at roots
of 1, Operator algebras, unitary representations, enveloping algebras,
and invariant theory (Paris, 1989), Progr. Math., {\bf  92} (1990),  471--506.

 \bibitem{Pan} Panyushev, D.: {\em The poset of positive roots and its relatives}, J. Alg. Comb., {\bf 23} (2006),  79--101.

 \bibitem{AM} Ashton, T., Mudrov, A.: {\em R-matrix and Mickelsson algebras for orthosymplectic quantum groups},  J. Math. Phys., {\bf 56}  (2015),  081701.

 \bibitem{BMO} Baranov, A., Mudrov, A., and Ostapenko, V.: {\em Quantum exceptional group $G_2$ and its semisimple conjugacy classes},   Alg.\& Rep.Theor., {\bf 23} (2020)
 1827--1848.


\end{thebibliography}
 \end{document}